\documentclass[letterpaper, reqno, 11pt]{amsart}
\usepackage{amssymb, amscd, latexsym, amsfonts, mathrsfs}
\usepackage[english]{babel}
\selectlanguage{english}
\RequirePackage[utf8]{inputenc}

\linespread{1.15}


\usepackage{enumerate, cite, calc, xstring, hyperref}
\usepackage{float, wrapfig, sidecap, multicol, needspace, booktabs}

\usepackage[T1]{fontenc}
\usepackage[all]{xy}
\usepackage{graphicx, xcolor, url}
\usepackage[top=2.75cm, bottom=2.75cm, left=2.75cm, right=2.75cm]{geometry}

\usepackage[T1]{fontenc}
\usepackage[scaled=.90]{helvet} 
\usepackage{dsfont, upgreek, mathabx, yfonts}
\usepackage{relsize}

\newtheorem{theorem}{Theorem}[section]
\newtheorem{proposition}[theorem]{Proposition}
\newtheorem{corollary}[theorem]{Corollary}
\newtheorem{lemma}[theorem]{Lemma}

\theoremstyle{definition}
\newtheorem{definition}[theorem]{Definition}
\newtheorem{example}[theorem]{Example}

\theoremstyle{remark}
\newtheorem{remark}[theorem]{Remark}

\numberwithin{equation}{section}




\newcommand{\tr}{\mathrm{tr}\,}


\newcommand{\mr}[1]{\mathrm{#1}}
\newcommand{\mc}[1]{\mathcal{#1}}

\newcommand{\bs}[1]{\boldsymbol{#1}}


\newcommand{\cx}{\bar{x}}

\newcommand{\bze}{\bs{0}}

\newcommand{\bb}{\bs{s}}
\newcommand{\bx}{\bs{x}}
\newcommand{\by}{\bs{y}}
\newcommand{\bz}{\bs{z}}
\newcommand{\bu}{\bs{u}}

\newcommand{\bH}{\bs{H}}

\newcommand{\bE}{\bs{e}}
\newcommand{\bN}{\bs{N}}
\newcommand{\bR}{\bs{R}}
\newcommand{\Rie}[4]{\langle\bR({#1},{#2}){#3},\,{#4}\rangle}
\newcommand{\Ric}[2]{\bs{\mc{R}ic}({#1},{#2})}

\newcommand{\cR}{\mc{R}}

\newcommand{\II}{\bs{\mr{II}}}

\newcommand{\metric}[2]{\langle\, {#1},\,{#2}\,\rangle}



\renewcommand{\hat}{\widehat}
\renewcommand{\bar}{\overline}



\newcommand{\dd}{\partial}

\newcommand{\del}{\delta}

\newcommand{\al}{\alpha}
\newcommand{\bet}{\beta}
\newcommand{\vep}{\varepsilon}
\newcommand{\lbd}{\lambda}

\newcommand{\ka}{\kappa}

\newcommand{\diag}{\mathrm{diag}}
\def\RR{{\mathbb R}}

\def\SS{{\mathbb S}}

\def\cS{{\mathcal S}}

\def\cO{{\mathcal O}}
\def\cM{{\mathcal M}}






\newcommand{\Id}{\mathrm{Id}}

\begin{document}

\title{Manifold Curvature Descriptors from Hypersurface Integral Invariants}

\author[J. \'Alvarez-Vizoso]{Javier \'Alvarez-Vizoso}
\author[M. Kirby]{Michael Kirby}
\author[C. Peterson]{Chris Peterson}
\address{Department of Mathematics, Colorado State University, Fort Collins, CO, USA}
\email{alvarez@math.colostate.edu, kirby@math.colostate.edu, peterson@math.colostate.edu}

\date{\today}

\maketitle


\begin{abstract}
Integral invariants obtained from Principal Component Analysis on a small kernel domain of a submanifold encode important geometric information classically defined in differential-geometric terms. We generalize to hypersurfaces in any dimension major results known for surfaces in space, which in turn yield a method to estimate the extrinsic and intrinsic curvature of an embedded Riemannian submanifold of general codimension. In particular, integral invariants are defined by the volume, barycenter, and the EVD of the covariance matrix of the domain. We obtain the asymptotic expansion of such invariants for a spherical volume component delimited by a hypersurface and for the hypersurface patch created by ball intersetions, showing that the eigenvalues and eigenvectors can be used as multi-scale estimators of the principal curvatures and principal directions. This approach may be interpreted as performing statistical analysis on the underlying point-set of a submanifold in order to obtain geometric descriptors at scale with potential applications to Manifold Learning and Geometry Processing of point clouds.
\end{abstract}


\DeclareRobustCommand{\SkipTocEntry}[5]{}

\makeatletter
\def\@tocline#1#2#3#4#5#6#7{\relax
\ifnum #1>\c@tocdepth 
  \else 
    \par \addpenalty\@secpenalty\addvspace{#2}%
\begingroup \hyphenpenalty\@M
    \@ifempty{#4}{%
      \@tempdima\csname r@tocindent\number#1\endcsname\relax
 }{%
   \@tempdima#4\relax
 }%
 \parindent\z@ \leftskip#3\relax \advance\leftskip\@tempdima\relax
 \rightskip\@pnumwidth plus4em \parfillskip-\@pnumwidth
 #5\leavevmode\hskip-\@tempdima #6\nobreak\relax
 \ifnum#1<0\hfill\else\dotfill\fi\hbox to\@pnumwidth{\@tocpagenum{#7}}\par
 \nobreak
 \endgroup
  \fi}
\makeatother

\tableofcontents


\section{Introduction}\label{sec:intro}

Manifold Learning has as its prime goal the local characterization and reconstruction of manifold geometry from the study of the underlying point set, usually embedded as a submanifold in an ambient space, typically Euclidean. To obtain theoretical results that can serve as tools for this endeavour, it is assumed that the complete continuous point set is known so that local statistical invariants on given domains can be shown to relate to the relevant local geometry, whereas in practice only a finite cloud of points, probably with noise, is available. In Geometry Processing, the development of these methods provides us with descriptors that function as geometry estimators, guide a possible reconstruction or serve as feature detectors. The integral invariant point of view attempts to overcome some of the difficulties of computational geometry when facing the task of extracting information that is classically defined as a differential invariant, like curvature, since its discrete version reduces to, e.g., sums instead of finite differences. The multi-scale behaviour and averaging nature of these invariants is also of importance in applications and their possible stability and robustness with respect to noise.

Series expansion of the volume of small geodesic balls within a manifold \cite{gray1979}, and volumes cut out by a hypersurface inside a ball of the ambient space \cite{hulin2003}, have been shown to be given in terms of the manifold curvature scalar invariants. In order to obtain local adaptive Galerkin bases for large-dimensional dynamical systems, the eigenvalue decomposition of covariance matrices of spherical intersection domains on the invariant manifold was introduced in \cite{broomhead1991local}, \cite{solis1993, solis2000} to provide estimates of the dimension of the manifold and a suitable decomposition of phase space at every point. In the case of curves, the Frenet-Serret apparatus is recovered with explicit formulas at scale to obtain descriptors of the generalized curvatures in terms of the eigenvalues of the covariance matrix \cite{alvarez2017}. Integral invariants were already introduced and employed in Geometry Processing applications by \cite{connolly1986},\cite{cazals2003a, cazals2003b}, \cite{clarenz2003, clarenz2004}, \cite{manay2004, manay2006}. Principal Component Analysis performed with covariance matrix integral invariants have been studied primarily for the case of curves and surface in 2D and 3D in \cite{alliez2007}, \cite{berkmann1994}, \cite{clarenz2003, clarenz2004}, \cite{hoppe1992}, \cite{manay2004}, \cite{pottmann2006}, as a means to determine relevant local geometric information while maintaining stability with respect to noise \cite{lai2009}, \cite{pottmann2007, pottmann2008}, e.g., for feature and shape detection using point clouds. Voronoi-based feature estimation \cite{merigot2009, merigot20011} has also taken advantage of the PCA covariance matrix approach. Those methods study embedded manifolds whereas intrinsic probability and statistical analysis using geometric measurements inside a Riemannian manifold have also been developed \cite{pennec1999, pennec2006} and could be use to do covariance analysis of submanifolds embedded in curved ambient spaces.

In this paper we follow and generalize the major theoretical results of \cite{pottmann2007} for surfaces in space to hypersurfaces in any dimension, which in turn allows for the extension of their approach to obtain  descriptors of the extrinsic and intrinsic curvature at a given scale for any Riemannian submanifold of general codimension in Euclidean space. 

The structure of the paper is as follows: in section \S\ref{sec:overview} notation is set with a brief overview of how the curvature of submanifolds, especially hypersurfaces, is classically defined as a differential invariant. In section \S\ref{sec:IntInv} PCA integral invariants and geometric descriptors are introduced to show how the study of hypersurfaces is sufficient to be extended to Riemannian submanifolds of any dimension by applying the analysis onto as many hypersurface projections of the submanifold as its codimension. In section \S\ref{sec:sphVol} these integral invariants are computed for a volume region delimited by a hypersurface inside a ball; the asymptotic expansions of the invariants with respect to the scale of the ball are shown to be given in terms of the principal curvatures and the dimension, and the eigenvectors of the covariance matrix are shown to converge in the limit to the principal directions as well. In section \S\ref{sec:sphPatch} the analogous analysis is carried out for the integral invariants of the hypersurface patch cut out by the ball. In section \S\ref{sec:descrip} we see how these asymptotic formulas can be inverted to yield geometric descriptors at scale of the principal curvatures and principal directions for hypersurfaces, thus establishing concrete formulas to use in the general method outlined for Riemannian submanifolds.

Since these results show that moment-based integral invariants asymptotically encode the classical curvature invariants, they furnish a local multi-scale method to probe the underlying geometry of embedded clouds of data points in higher dimensions using integration over discretized domains instead of differential approximations, thus extending the benefits of using the integral invariant techniques in 3D studied in the literature to the fields of Manifold Learning and Geometry Processing in general dimension.


\section{Review of Submanifold Differential Geometry}\label{sec:overview}

Here we briefly revisit the basic differential-geometric concepts \cite{chavel2006}, \cite{kobayashi1969}, \cite{oneil1983}, \cite{spivak1999} needed for the present work, and in the appendix we set the notation for spherical coordinates, areas of spheres, volumes of balls and integrals of monomials over them in $\RR^n$.

An embedded $n$-dimensional Riemannian submanifold $(\cM,g)$ in $\RR^{n+k}$ is a smooth submanifold endowed with the metric given by the restriction of the Euclidean metric to the tangent space $T_p\cM$ at every point $p\in\cM$, $g(\bx,\by)=\metric{\bx}{\by}$, also called the first fundamental form $\text{I}(\bx,\by)$. If $\bs{X}:U\subset\RR^n\rightarrow\RR^{n+k}$ is a chart or parametrization of $\cM$ with coordinates $x_\mu$, the tangent space $T_p\cM$ is spanned by the vectors $\dd_\mu\bs{X}\vert_p$, so the metric components in this basis are 
$$g_{\mu\nu}(p)=\metric{\dd_\mu\bs{X}\vert_p}{\dd_\nu\bs{X}\vert_p},$$
and the $n$-dimensional volume element on $\cM$ can be found in this coordinates to be 
$$\text{dVol}=\sqrt{\det g}\; d^n\bx.$$
Euclidean space has a directional derivative given by the usual $\bar{\nabla}_{\bx}$, and when $\bx\in T\cM$ its projection to the manifold tangent space induces the Levi-Civita connection $\nabla_{\bx}$ associated to $g$ on it. The second fundamental form of $\cM$ is defined to be the normal component in ambient space:
$$
\II (\bx ,\by)=(\bar{\nabla}_{\bx}\,\by)_\perp=\bar{\nabla}_{\bx}\,\by-\nabla_{\bx}\,\by,\quad\quad \bx,\by\in T\cM.
$$
It is a symmetric bilinear form on the tangent space with values in the normal bundle of $\cM$ that encodes the extrinsic curvature of the embedded manifold. In particular, the mean curvature vector is defined by the invariance of the trace of $\II$ for any orthonormal tangent frame $\{\bE_\mu\}_{\mu=1}^n$:
$$
\bH = \sum_{\mu=1}^n\;\II(\bE_\mu,\bE_\mu) = \sum_{j=1}^k H_j\bN_j.
$$
The Riemann curvature tensor is the fundamental intrinsic invariant of $\cM$. The standard definition is as a linear operator measuring the non-commutativity of the covariant derivative:
$$
\bR(\bx,\by)\bz=(\nabla_{\bx}\nabla_{\by}-\nabla_{\by}\nabla_{\bx}-\nabla_{[\bx,\by]})\bz,\quad\quad \bx,\by,\bz\in T_p\cM.
$$
The Riemann tensor vanishes locally if and only if there is a chart in whose coordinates the metric tensor reduces to the Euclidean metric, i.e., if the manifold is locally flat, or equivalently, if parallel transport is integrable and there is no geodesic deviation. Contractions of it produce all the other curvature tensors, like the Ricci tensor 
$$
\Ric{\bx}{\by}=\sum_{\mu=1}^n\Rie{\bE_\mu}{\bx}{\by}{\bE_\mu},
$$
and the scalar curvature: $\cR = \sum_{\mu}\Ric{\bE_\mu}{\bE_\mu}$.

Of major importance for the geometry of submanifolds is Gau{\ss} equation.
\begin{theorem}
	Let $\cM$ be an embedded Riemannian manifold in Euclidean space, then the intrinsic Riemann curvature tensor and the extrinsic second fundamental form are related by:
\begin{equation}\label{eq:Gauss}
	\Rie{\bE_\mu}{\bE_\nu}{\bE_\al}{\bE_\bet} =\metric{\II(\bE_\mu,\bE_\bet)}{\II(\bE_\nu,\bE_\al)} - \metric{\II(\bE_\mu,\bE_\al)}{\II(\bE_\nu,\bE_\bet)}.
\end{equation}
for any orthonormal tangent frame $\{\bE_\mu\}_{\mu=1}^n$.
\end{theorem}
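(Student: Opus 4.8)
The plan is to derive the Gauss equation from the flatness of the ambient Euclidean space, exploiting the orthogonal splitting of the ambient directional derivative into its tangential and normal components. The starting point is the defining decomposition $\bar{\nabla}_{\bx}\by = \nabla_{\bx}\by + \II(\bx,\by)$ for tangent fields $\bx,\by$, where the first summand is tangent and the second is normal. Since $\RR^{n+k}$ carries the flat Euclidean connection, its ambient Riemann tensor vanishes identically, $(\bar{\nabla}_{\bx}\bar{\nabla}_{\by} - \bar{\nabla}_{\by}\bar{\nabla}_{\bx} - \bar{\nabla}_{[\bx,\by]})\bz = 0$, and the whole identity will emerge by extracting the tangential part of this vanishing expression. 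Because both sides of the claimed equation are tensorial (pointwise) in all four arguments, I may freely extend $\bE_\mu,\bE_\nu,\bE_\al,\bE_\bet$ to local tangent vector fields without affecting the result.

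First I would expand $\bar{\nabla}_{\bx}\bar{\nabla}_{\by}\bz$ by applying the decomposition twice, obtaining $\bar{\nabla}_{\bx}\bar{\nabla}_{\by}\bz = \bar{\nabla}_{\bx}\bigl(\nabla_{\by}\bz + \II(\by,\bz)\bigr) = \nabla_{\bx}\nabla_{\by}\bz + \II(\bx,\nabla_{\by}\bz) + \bar{\nabla}_{\bx}\II(\by,\bz)$. The only delicate term is the last one, the ambient derivative of the normal field $\II(\by,\bz)$. Rather than introducing the Weingarten map explicitly, I would compute its tangential component by pairing against an arbitrary tangent field $\bw$ and differentiating the orthogonality relation $\metric{\II(\by,\bz)}{\bw}=0$; this yields $\metric{\bar{\nabla}_{\bx}\II(\by,\bz)}{\bw} = -\metric{\II(\by,\bz)}{\bar{\nabla}_{\bx}\bw} = -\metric{\II(\by,\bz)}{\II(\bx,\bw)}$, since only the normal part $\II(\bx,\bw)$ of $\bar{\nabla}_{\bx}\bw$ survives the inner product against the normal vector $\II(\by,\bz)$.

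Pairing the full expansion with $\bw$ then kills the purely normal term $\II(\bx,\nabla_{\by}\bz)$ and leaves $\metric{\bar{\nabla}_{\bx}\bar{\nabla}_{\by}\bz}{\bw} = \metric{\nabla_{\bx}\nabla_{\by}\bz}{\bw} - \metric{\II(\bx,\bw)}{\II(\by,\bz)}$. Writing the analogous identity with $\bx$ and $\by$ interchanged, and noting that $\metric{\bar{\nabla}_{[\bx,\by]}\bz}{\bw} = \metric{\nabla_{[\bx,\by]}\bz}{\bw}$, I would substitute all three into the vanishing ambient curvature. The tangential Levi-Civita terms reassemble into $\metric{\bR(\bx,\by)\bz}{\bw}$ by the definition of the Riemann operator, and the surviving second-fundamental-form terms give $\metric{\bR(\bx,\by)\bz}{\bw} = \metric{\II(\bx,\bw)}{\II(\by,\bz)} - \metric{\II(\by,\bw)}{\II(\bx,\bz)}$. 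Setting $(\bx,\by,\bz,\bw)=(\bE_\mu,\bE_\nu,\bE_\al,\bE_\bet)$ and using the symmetry of $\II$ and of the inner product reproduces \eqref{eq:Gauss} exactly.

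The main obstacle is conceptual rather than computational: correctly handling the ambient derivative of the normal-valued field $\II(\by,\bz)$ and maintaining clean bookkeeping of which pieces are tangent and which are normal throughout the double differentiation. Everything else reduces to the bilinearity and symmetry of $\II$ together with the flatness $\bar{\bR}\equiv 0$, and once the tangential projection is taken the identity falls out directly.
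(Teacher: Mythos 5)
Your proof is correct. Note that the paper itself does not prove this statement: it appears in the background review section (\S\ref{sec:overview}) as a classical result, with the standard references (Chavel, Kobayashi--Nomizu, O'Neill, Spivak) implicitly covering it, so there is no in-paper argument to compare against. What you give is the standard textbook derivation --- decompose $\bar{\nabla}_{\bx}\by = \nabla_{\bx}\by + \II(\bx,\by)$, use flatness of the ambient Euclidean connection, and extract the tangential component by pairing with a tangent field $\bw$ --- and every step checks out: the handling of $\bar{\nabla}_{\bx}\II(\by,\bz)$ via metric compatibility (differentiating $\metric{\II(\by,\bz)}{\bw}=0$) correctly replaces an explicit appeal to the Weingarten map, the commutator term contributes only its tangential part, and the final index assignment $(\bx,\by,\bz,\bw)=(\bE_\mu,\bE_\nu,\bE_\al,\bE_\bet)$ reproduces \eqref{eq:Gauss} exactly, using the symmetry of $\II$ and of the inner product. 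Your remark that tensoriality licenses the extension of the frame vectors to local fields is also the right justification for that step.
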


A hypersurface $\cS$ is an embedded manifold of codimension 1, many of whose properties generalize those of surfaces in $\RR^3$. Its second fundamental form can also be introduced via the Weingarten map, or shape operator $\bs{\hat{S}}$ defined as follows: given a choice of unit normal vector field $\bN$ around $p\in\cS$, there is a linear endomorphism of $T_p\cS$ given by
$$
\bs{\hat{S}}(\bx) = -\bar{\nabla}_{\bx}\bN,\quad \forall\bx\in T_p\cM
$$
such that the classical second fundamental form is related to the one defined above by:
$$\text{II}(\bx ,\by)=\langle\,\II (\bx ,\by),\,\bN\,\rangle = \metric{\bs{\hat{S}}(\bx)}{\by}.$$
The Weingarten map encodes how the hypersurface normal vector varies in the ambient space when moving in a direction tangent to the hypersurface, thus measuring curvature. Moreover, $\bs{\hat{S}}$ is self-adjoint with respecto to the metric so there is an orthonormal basis of $T_p\cS$ given by its eigenvectors called the principal directions of $\cS$ at $p$. The corresponding eigenvalues are called principal curvatures, $\{\ka_\mu(p)\}_{\mu=1}^n$, because $\metric{\bs{\hat{S}}(\bu)}{\bu}$ measures the normal acceleration of a curve inside $\cS$ with unit tangent $\bu$. The $2$-plane spanned by a tangent vector $\bu\in T_p\cS$ and the normal vector $\bN\in N_p\cS$ intersects the hypersurface in a normal section curve whose first Frenet-Serret curvature is precisely the normal curvature given by $\metric{\bs{\hat{S}}(\bu)}{\bu}$. For any tangent vector the normal section curvature is
\begin{equation}\label{eq:normalCurv}
	\ka(\bx)=\frac{\text{II}(\bx,\bx)}{\text{I}(\bx,\bx)},\quad \forall \bx\in T_p\cS
\end{equation}

Furthermore, one can define elementary curvature scalars $K_1(p),\dots, K_n(p)$ as the elementary symmetric polynomials on the $\{\ka_\mu(p)\}_{\mu=1}^n$. In particular the mean curvature of a hypersurface is
\begin{equation}
	H(p)=K_1(p)=\tr\bs{\hat{S}} =\sum_{\mu=1}^n\ka_\mu(p) = \sum_{\mu=1}^n\text{II}(\bE_\mu,\bE_\mu),
\end{equation}
the scalar curvature is $\cR(p) = 2K_2(p)$, and the Gau{\ss}ian curvature is 
\begin{equation}
	K_n(p) =\det\bs{\hat{S}}= \prod_{\mu=1}^n\ka_\mu(p) = \prod_{\mu=1}^n\text{II}(\bE_\mu,\bE_\mu).
\end{equation}
\begin{remark}
	To simplify notation we shall write $\ka_\mu, H,\,\cR$ instead of $\ka_\mu(p), H(p),\,\cR(p)$, etc. if the point is understood from the context. The point itself may be denoted $p$ if interpreted set-theoretically in $\cS$, or $\bs{p}$ if considered as a vector when it appears in linear operations of $\RR^{n+1}$.
\end{remark}
Notice the most elementary Newton relation between the power sum function of order 2 and the elementary symmetric polynomials yields the useful expression:
\begin{equation}\label{eq:trS2}
\tr\bs{\hat{S}}^2=\sum_{\mu=1}^n\ka^2_\mu = K_1-2K_2 = H^2-\cR.
\end{equation}
In fact, more is true since Gau{\ss} equation applied to the Ricci tensor of a hypersurface leads to
\begin{equation}\label{eq:iiiForm}
	\Ric{\bx}{\by}=H\metric{\bs{\hat{S}}(\bx)}{\by}-\metric{\bs{\hat{S}}^2 (\bx)}{\by}.
\end{equation}

Using the implicit function theorem, the wedge product of the tangent basis and Gau{\ss} equation, one can find the following crucial lemma for the approximations made in the present work.
\begin{lemma}\label{lem:localHyper}
There is an open neighborhood $U_p$ around any point $p\in\cS$ such that the smooth hypersurface $\cS$ is locally given by a graph $z:U_p\subset T_p\cS\cong\RR^n\rightarrow T_p\cS\oplus\langle\bN_p\rangle \cong\RR^{n+1}$, with $\bs{p}=\bze$, and $\nabla z(\bze)=\bze$, thus defined to leading order by an osculating paraboloid which in the basis of principal directions becomes:
\begin{equation}
z(\bx) = \frac{1}{2}\sum_{\mu=1}^n\ka_\mu x^2_\mu +\cO(x^3).
\end{equation}
In this neighborhood the area element is
\begin{equation}\label{eq:volElem}
	\text{dVol}\vert_{U_p}=\sqrt{\det g}\;\,d^n\bx = \sqrt{1+\sum_{\mu=1}^n\left(\frac{\dd z}{\dd x_\mu}\right)^2}\; dx_1\cdots dx_n.
\end{equation}
The second fundamental form at $p$ corresponds to the Hessian matrix of $z(\bx)$ at $p$:
\begin{equation}
\text{\emph{II}}_p(\bE_\mu,\bE_\nu) = \left[ \frac{\dd^2 z}{\dd x_\mu\dd x_\nu} \right]_p
\end{equation}
where $\{\bE_\mu\}_{\mu=1}^n$ are the principal basis vectors. In this basis the Riemann tensor reduces to
\begin{equation}
	\Rie{\bE_\mu}{\bE_\nu}{\bE_\al}{\bE_\bet}= \ka_\mu(p)\ka_\nu(p)(\del_{\al\nu}\del_{\mu\bet}-\del_{\al\mu}\del_{\bet\nu}),
\end{equation}
the diagonal components of the Ricci tensor are
\begin{equation}
	\Ric{\bE_\mu}{\bE_\mu} =R_{\mu\mu}(p)= \sum_{\al\neq\mu}^n\ka_\al(p)\ka_\mu(p),
\end{equation}
and the scalar curvature is
\begin{equation}
	\cR(p) = 2K_2(p) = 2\sum_{\mu <\nu}^n\ka_\mu(p)\ka_\nu(p).
\end{equation}
\end{lemma}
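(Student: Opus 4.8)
The plan is to build the graph representation with the implicit function theorem, read off all curvature data from the Hessian of the height function \emph{at} $p$, and then feed the diagonal second fundamental form into Gau\ss's equation \eqref{eq:Gauss} to recover the Riemann, Ricci and scalar curvatures. First I would place the origin at $\bs{p}=\bze$, identify the horizontal $\RR^n$ with $T_p\cS$ and the vertical axis with $\langle\bN_p\rangle$. Since $\cS$ is a smooth embedded hypersurface, the orthogonal projection onto $T_p\cS$ has surjective differential at $p$, so the implicit function theorem produces a smooth $z\colon U_p\subset T_p\cS\to\RR$ whose graph parametrizes $\cS$ near $p$. Demanding that the horizontal plane be tangent at the origin forces $z(\bze)=0$ and $\nabla z(\bze)=\bze$, so the Taylor expansion of $z$ starts at second order, $z(\bx)=\tfrac12\bx^{\top}\hess z(\bze)\,\bx+\cO(x^3)$.

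Next I would compute the fundamental forms of the Monge patch $\bs{X}(\bx)=(\bx,z(\bx))$. The tangent vectors $\dd_\mu\bs{X}=(\bE_\mu,\dd_\mu z)$ give $g_{\mu\nu}=\del_{\mu\nu}+\dd_\mu z\,\dd_\nu z$; being a rank-one update of the identity, $\det g=1+|\nabla z|^2$, which is exactly \eqref{eq:volElem}. With unit normal $\bN=(1+|\nabla z|^2)^{-1/2}(-\nabla z,1)$ and $\text{II}(\dd_\mu\bs{X},\dd_\nu\bs{X})=\langle\dd_\mu\dd_\nu\bs{X},\bN\rangle$, evaluation at $p$, where $\nabla z(\bze)=\bze$ makes $\bN=(\bze,1)$ and $g_{\mu\nu}=\del_{\mu\nu}$, collapses this to $\text{II}_p(\bE_\mu,\bE_\nu)=[\dd^2 z/\dd x_\mu\dd x_\nu]_p$, the Hessian. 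Because the first fundamental form is the identity at $p$, the shape operator $\bs{\hat{S}}$ equals this Hessian, so its eigenvectors are the principal directions and its eigenvalues the $\ka_\mu$. Aligning the horizontal coordinates with the principal directions diagonalizes $\hess z(\bze)=\diag(\ka_1,\dots,\ka_n)$, yielding the osculating paraboloid $z(\bx)=\tfrac12\sum_\mu\ka_\mu x_\mu^2+\cO(x^3)$.

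With $\text{II}_p(\bE_\mu,\bE_\nu)=\ka_\mu\del_{\mu\nu}$ in hand, the remaining identities are a direct substitution. Writing $\II(\bE_\mu,\bE_\nu)=\ka_\mu\del_{\mu\nu}\bN$ and using $\langle\bN,\bN\rangle=1$, equation \eqref{eq:Gauss} gives $\Rie{\bE_\mu}{\bE_\nu}{\bE_\al}{\bE_\bet}=\ka_\mu\ka_\nu(\del_{\al\nu}\del_{\mu\bet}-\del_{\al\mu}\del_{\bet\nu})$. Contracting the outer slots as $\Ric{\bE_\mu}{\bE_\mu}=\sum_\al\Rie{\bE_\al}{\bE_\mu}{\bE_\mu}{\bE_\al}$ yields $R_{\mu\mu}=\sum_{\al\neq\mu}\ka_\al\ka_\mu$, and one more trace gives $\cR=\sum_\mu R_{\mu\mu}=2\sum_{\mu<\nu}\ka_\mu\ka_\nu=2K_2$, each unordered pair being counted twice.

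The individual computations are routine once the frame is fixed; the one genuinely load-bearing point is that the identification of the Hessian eigenvalues with the principal curvatures is a \emph{pointwise} statement at $p$. Away from $p$ the shape operator differs from the Hessian by the factor $(1+|\nabla z|^2)^{-1/2}$ and by $g^{-1}$, but precisely because $\nabla z(\bze)=\bze$ these reduce to the identity at the basepoint. The main thing to be careful about is therefore not to conflate the exact expression of $\text{II}$ over the patch with its value at $p$: every curvature formula in the statement is asserted at $p$, and it is the vanishing of $\nabla z$ there that makes the shape operator, the Hessian, and $\diag(\ka_\mu)$ agree.
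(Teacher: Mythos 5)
Your proposal is correct and follows exactly the route the paper itself indicates for this lemma (which it states without a written-out proof, citing the implicit function theorem, the tangent-basis computation, and Gau{\ss} equation \eqref{eq:Gauss}): graph via the implicit function theorem, Monge-patch first and second fundamental forms with $\det g = 1+|\nabla z|^2$, pointwise identification of $\text{II}_p$ with the Hessian thanks to $\nabla z(\bze)=\bze$, and then Gau{\ss} equation plus traces for the Riemann, Ricci, and scalar curvatures. Your closing caveat---that the Hessian equals the shape operator only at $p$, where the metric is the identity and the normalization factor $(1+|\nabla z|^2)^{-1/2}$ is $1$---is precisely the point that makes the statement hold, so nothing is missing.
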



\section{Integral Invariants and Descriptors}\label{sec:IntInv}

Our approach generalizes the theoretical part of the seminal work \cite{pottmann2007} with a focus on the analytical expansion of integral invariants to get descriptors of manifold curvature in any dimension. The local integral invariants considered are integrals over small kernel domains determined by balls and the hypersurface. In particular, we will focus on the Principal Component Analysis of a $(n+1)$-dimensional region delimited by the hypersurface inside a ball centered at a point on the hypersurface, and the $n$-dimensional patch on the hypersurface cut out by such a ball. In general, one can define invariants for a measurable domain by computing the moments of the coordinates of the points inside, which leads us to

\begin{definition}
	Let $D$ be a measurable domain in $\RR^n$, the \emph{integral invariants} associated to the moments of order 0, 1 and 2 of the coordinate functions of the points of $D$ are: the volume
	\begin{equation}
		V(D) = \int_D 1\; \text{dVol},
	\end{equation}
	the barycenter
	\begin{equation}
		\bb(D) = \frac{1}{V(D)}\int_D \bs{X} \; \text{dVol},
	\end{equation} 
	and the eigenvalue decomposition of the covariance matrix
	\begin{equation}
		C(D) =\int_D (\bs{X}-\bb(D))\otimes (\bs{X}-\bb(D))^T \; \text{dVol}.
	\end{equation} 
	Here dVol is the measure on $D$ induced by restriction of the Euclidean measure, and the tensor product is to be understood as the outer product of the components in a chosen basis.
\end{definition}

These can be interpreted as statistical characterization measurements of a continuous distrubition: the volume measures the size or mass of the set; the barycenter measures the centralization of the domain as a mean or average point, i.e., a center of mass; finally, the covariance matrix is a measure of the dispersion of the points in $D$ around its center of mass. From this statistical point of view, we could define the covariance matrix normalized by $V(D)$ as well, so that $\frac{\text{dVol}}{V}$ is a density, but this will not affect our results in any significant way (essentially, the second-to-leading order term in the volume equations \ref{eq:volSph}, \ref{eq:volPatch}, would get added to the eigenvalues at that order).

An \emph{integral invariant descriptor} $F(D)$ of some feature $F$ of a measurable domain $D$ is any expression for $F$ completely given in terms of $V(D),\, \bb(D)$, the eigenvalue decomposition of $C(D)$ or other integral invariants. If the domain $D$ is determined by a region of a hypersurface $\cS$, the main geometric descriptors are any principal curvature estimators $\ka_\mu(D)$ of $\ka_\mu(p)$, and principal and normal direction estimators $\bE_\mu(D),\,\bN(D)$ of $\bE_\mu(p),\bN(p)$, for some known point $p\in\cS$. If the domain $D$ is determined by a region of an embedded manifold $\cM$, the main geometric descriptor is any second fundamental form estimator, $\II(D)$ of $\II_p$, for some known point $p\in\cM$. Since our domain $D$ of interest will possess a natural scale $\vep$ determined by the size of the ball that shall define it, we shall talk about \emph{descriptors at scale}. Moreover, throughout all the paper we consider $\vep$ to be small enough so that we can approximate the hypersurface $\cS$ by the local graph representation of its osculating paraboloid at $p$, which is sufficient to obtain the first terms of the asymptotic expansions of the integral invariants.

These descriptors become valuable tools to perform Manifold Learning, feature detection and shape estimation when only partial knowledge of the complete set of points is known or when noise is present. In this regard, \cite{pottmann2006, pottmann2007, pottmann2008} carried out experimental and theoretical analysis of the stability of these and other descriptors in the case of curves and surfaces in $\RR^3$, reporting for example that the invariants of the spherical component domain are more robust with respect to noise than the patch region ones. It is to be expected that the same stability behavior holds in the hypersurface case due to the sensitivity to small changes of an $n$-dimensional patch compared to an $(n+1)$-dimensional volume of which the perturbed patch is part of its boundary.

When the asymptotic expansions with respect to scale of hypersurface integral invariants are available to high enough order, curvature information can be extracted by truncating the series and inverting the relations in order to obtain a computable multi-scale estimator of the actual curvatures. In particular, the eigenvalues of the covariance matrix will provide such a descriptor for the principal curvatures of a smooth hypersurface, $\ka_\mu(D)$, and its eigenvectors $\{\bE_\mu(D)\}_{\mu=1}^n$, and $\bE_{n+1}(D)$, will do the same for the principal normal directions. In order to produce analogous descriptors for an embedded Riemannian manifold of higher codimension, we just need to apply the procedure to the $k$ hypersurfaces created by projecting the manifold down to $(n+1)$ linear subspaces.

\begin{lemma}
	Let $\cM\subset\RR^{n+k}$ be an $n$-dimensional embedded Riemannian manifold, and fix an orthonormal tangent basis $\{\bE_\mu\}_{\mu=1}^n$ of the tangent space $T_p\cM$, and an orthonormal basis $\{\bN_j\}_{j=1}^k$ of the normal space $N_p\cM$ at $p\in\cM$. Consider a ball $B^{(n+k)}_p(\vep)$ for small enough $\vep>0$, such that the projections of $\cM\cap B^{(n+k)}_p(\vep)$ onto the linear subspaces $T_p\cM\oplus\langle\bN_j\rangle$, for all $j=1,\dots,k$, are smooth hypersurfaces $\cS_j$. Then, if $\ka^{(j)}_\mu(D),\, \{\bE^{(j)}_\mu(D)\}_{\mu=1}^n$ are descriptors of the principal curvatures and principal directions at $p$ for each of the hypersurfaces $\cS_j$, then the second fundamental form of $\cM$ at $p$ has a descriptor:
\begin{equation}
	\II_p(D)(\bE_\mu,\bE_\nu) = \sum_{j=1}^k [V_j(D) K_j(D) V(D)^T_j]_{\mu\nu}\; \bN_j\; ,
\quad\quad\, \mu,\nu =1,\dots,n,
\end{equation}
where $[V_j(D)]$ are the matrices whose columns are the components of $\{\bE^{(j)}_\mu(D)\}_{\mu=1}^n$ in the chosen basis $\{\bE_\mu\}_{\mu=1}^n$, and $[K_j(D)]$ is the diagonal matrix of principal curvature estimators. In turn, the Riemann curvature tensor of $\cM$ at $p$ acquires a descriptor:
\begin{equation}
	\langle\bR(D)({\bE_\mu},\bE_\nu)\bE_\al,\,\bE_\bet\rangle=\sum_{j=1}^k\left(\,[ V_j K_j V^T_j]_{\mu\bet}[ V_j K_j V^T_j]_{\nu\al}- [V_j K_j V^T_j]_{\mu\al}[V_j K_j V^T_j]_{\nu\bet}\, \right).
\end{equation}
\end{lemma}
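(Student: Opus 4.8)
The plan is to reduce the general-codimension statement to $k$ independent hypersurface problems, by decomposing the vector-valued second fundamental form $\II_p$ of $\cM$ along the orthonormal normal frame $\{\bN_j\}_{j=1}^k$ and identifying each scalar component with the second fundamental form of the projected hypersurface $\cS_j$. Once $\II_p$ is reconstructed as a sum over $j$, the Riemann-tensor descriptor follows immediately by feeding this expression into Gau{\ss} equation \eqref{eq:Gauss}.

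First I would write $\cM$ near $p$ as a graph over its tangent space, in analogy with Lemma \ref{lem:localHyper}. Since $T_p\cM=\langle\bE_\mu\rangle$ and $N_p\cM=\langle\bN_j\rangle$, for $\vep$ small there exist smooth $f_j:U\subset T_p\cM\to\RR$ with $f_j(\bze)=\bze$ and $\nabla f_j(\bze)=\bze$ such that $\cM\cap B^{(n+k)}_p(\vep)$ is parametrized by $\bs{X}(\bx)=\bx+\sum_{j=1}^k f_j(\bx)\bN_j$. The orthogonal projection onto $T_p\cM\oplus\langle\bN_j\rangle$ merely forgets the remaining normal coordinates, so $\cS_j=\projj{\cM}{T_p\cM\oplus\langle\bN_j\rangle}$ is exactly the graph $\bx\mapsto\bx+f_j(\bx)\bN_j$ in $\RR^{n+1}$, carrying the \emph{same} function $f_j$. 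This is the step that makes the projection transparent: no mixing of distinct normal directions occurs at the level of each individual graph function.

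Next I would identify the components. Differentiating $\bs{X}$ twice gives $\dd_\mu\dd_\nu\bs{X}=\sum_{j}[\hess f_j]_{\mu\nu}\bN_j$, and since $\nabla f_j(\bze)=\bze$ the first derivatives vanish at $p$, so this vector is already normal there; hence $\metric{\II_p(\bE_\mu,\bE_\nu)}{\bN_j}=[\hess f_j(\bze)]_{\mu\nu}$. On the other hand, applying Lemma \ref{lem:localHyper} to $\cS_j$ — whose tangent space at $p$ is $T_p\cM$ and whose unit normal is $\bN_j$ — shows that the shape operator $\voS_j$ of $\cS_j$ has matrix $\hess f_j(\bze)$ in the basis $\{\bE_\mu\}$. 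Thus the $\bN_j$-slice of $\II_p$ is precisely the scalar second fundamental form of $\cS_j$. Spectrally decomposing this self-adjoint matrix through the principal curvatures $\ka^{(j)}_\mu$ and principal directions $\bE^{(j)}_\mu$ of $\cS_j$ gives $[\metric{\II_p(\bE_\mu,\bE_\nu)}{\bN_j}]=V_jK_jV_j^T$, with $V_j$ holding the principal directions as columns in the fixed basis and $K_j=\diag(\ka^{(j)}_1,\dots,\ka^{(j)}_n)$. Summing the orthonormal decomposition $\II_p(\bE_\mu,\bE_\nu)=\sum_{j=1}^k\metric{\II_p(\bE_\mu,\bE_\nu)}{\bN_j}\,\bN_j$ over $j$ yields the stated formula; replacing the exact $\ka^{(j)}_\mu,\bE^{(j)}_\mu$ by the integral-invariant descriptors $\ka^{(j)}_\mu(D),\bE^{(j)}_\mu(D)$ makes the right-hand side an expression entirely in the invariants, hence a legitimate descriptor of $\II_p$.

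For the curvature tensor I would substitute the reconstructed $\II_p$ into Gau{\ss} equation \eqref{eq:Gauss}. Using orthonormality $\metric{\bN_j}{\bN_{j'}}=\del_{jj'}$, each inner product of two second-fundamental-form values collapses the cross terms and leaves a single sum over $j$, namely $\metric{\II_p(\bE_\mu,\bE_\bet)}{\II_p(\bE_\nu,\bE_\al)}=\sum_{j=1}^k[V_jK_jV_j^T]_{\mu\bet}[V_jK_jV_j^T]_{\nu\al}$, and the two Gau{\ss} terms assemble into exactly the claimed Riemann descriptor. I expect the only genuinely delicate point to be the identification in the previous paragraph — that orthogonal projection of $\cM$ onto $T_p\cM\oplus\langle\bN_j\rangle$ produces a hypersurface whose second fundamental form at $p$ is the $\bN_j$-slice of $\II_p$ rather than some perturbed quantity; the vanishing of the cross terms $j\neq j'$ is what guarantees the descriptor splits cleanly into a sum over the $k$ hypersurfaces, and everything else is orthonormal bookkeeping together with a direct invocation of Gau{\ss} equation.
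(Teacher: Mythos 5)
Your proof is correct and follows essentially the same route as the paper's: local graph representation $(\bx, f_1(\bx),\dots,f_k(\bx))$ via the implicit function theorem, identification of each projection $\cS_j$ with the graph of the single function $f_j$, recognition that $\hess f_j(\bze)=V_jK_jV_j^T$ is the second fundamental form of $\cS_j$ at $p$ in the fixed basis, and Gau{\ss} equation for the Riemann descriptor. The only difference is that where the paper cites Kobayashi--Nomizu for the decomposition $\II_p(\bE_\mu,\bE_\nu)=\sum_{j=1}^k[\hess f_j(\bze)]_{\mu\nu}\,\bN_j$, you derive it directly by differentiating the parametrization twice and noting that at $p$ the second derivative vector is already normal (since $\nabla f_j(\bze)=\bze$) --- a self-contained filling-in of the cited fact, not a genuinely different argument.
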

\begin{proof}
By the implicit function theorem, there is a neighborhood of $U_p\subset T_p\cM$ such that the manifold can be locally given by a graph 
$
\bx\mapsto (\bx,f_1(\bx),\dots,f_k(\bx)),
$
where $\bx\in U_p$, $p$ corresponds to $0$, and $\nabla f_j(\bze)=\bze$. From this, the projection hypersurfaces $\cS_j$ are just $(\bx,f_j(\bx))$ for $j=1,\dots, k$. It can be shown \cite[vol. II ex. 3.3.]{kobayashi1969} that the second fundamental form of $\cM$ at $p$ is precisely the linear combination of the second fundamental forms of each of the hypersurface projections weighed by the corresponding normal vector, i.e.,
$$
\II_p(\bE_\mu,\bE_\nu) =\sum_{j=1}^k \left[ \frac{\dd^2 f_j}{\dd x_\mu\dd x_\nu}(p) \right]\,\bN_j
$$
Analyzing each of those hypersurfaces in $T_p\cM\oplus\langle\bN_j\rangle\cong\RR^{n+1}$, to obtain descriptors $\ka^{(j)}_\mu(D)$, $\{\bE^{(j)}_\mu(D)\}_{\mu=1}^n$ for every $j$, we obtain precisely a descriptor of the eigenvalue decomposition of each Hessian, i.e., $\text{Hess}\, f_j\vert_p(D)=[V_j(D) K_j(D) V(D)^T_j]$ is an estimator of the second fundamental form of $\cS_j$ at $p$ in the original basis. Applying Gau{\ss} equation \ref{eq:Gauss} yields a corresponding descriptor for the Riemann tensor.
\end{proof}


\section{Hypersurface Spherical Component Integral Invariants}\label{sec:sphVol}

The following domain is introduced in \cite{hulin2003} to study the relation between the mean curvature of hypersurfaces and the volume of sections of balls (we reserve their notation $B^+_p(\vep)$ for the half-ball).

\begin{definition}
	Let $\cS$ be a smooth hypersurface in $\RR^{n+1}$ with a locally chosen normal vector field $\bN:\cS\rightarrow\RR^{n+1}$. Let $B^{(n+1)}_p(\vep)$ be a ball of radius $\vep>0$ centered at a point $p\in\cS$, for small enough $\vep$ the hypersurface always separates this ball into two connected components. Consider the region $V^+_p(\vep)$ to be that spherical component such that $\bN(p)$ points towards inside the region $V^+_p(\vep)$.
\end{definition}

All the methods and results of \cite{pottmann2007} for surfaces using this domain generalize because to approximate integrals of functions over this type of region in $\RR^3$, the formula developed in their work makes use of the hypersurface approximations of \cite{hulin2003}, valid in any dimension.

\begin{lemma}\label{lem:intApprox}
	Let $f:\RR^{n+1}\rightarrow\RR$ be a function of order $\cO(\rho^k z^l)$ in cylindrical coordinates $\bs{X}=(\bx,z)=(\rho\bs{\cx},z),\;\bs{\cx}\in\SS^{n-1}$, let $\cS$ be a graph hypersurface given by the function $z(\bx)$ whose normal at the origin points in the positive $z$-axis, and $V^+_p(\vep)$ the spherical component delimited by this $\cS$, then
\begin{equation}
	\int_{V^+_p(\vep)} f(\bs{X})\text{\emph{dVol}} = \int_{B^+_p(\vep)}f(\bs{X})\text{\emph{dVol}}\; -\; \int_{B^{n}_p(\vep)}\left[ \int_{z=0}^{z=\frac{1}{2}\sum_{\mu=1}^n\ka_\mu x^2_\mu } f(\bx,z)\, dz\right]\, d^n\bx + \cO(\vep^{k+2l+n+3})
\end{equation}
	where the half ball $B^+_p(\vep)$ consists of the points of $B^{n+1}_p(\vep)$ such that $z\geq 0$.
\end{lemma}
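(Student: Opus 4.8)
The plan is to evaluate the difference $\int_{V^+_p(\vep)} f - \int_{B^+_p(\vep)} f$ as a single integral over the region on which the two domains disagree, and then to track orders using the cylindrical bound $f=\cO(\rho^k z^l)$. First I would fix the geometry: since $\nabla z(\bze)=\bze$, the tangent hyperplane to $\cS$ at $p$ is $\{z=0\}$ and the chosen normal points along $+z$, so by the defining property of $V^+_p(\vep)$ one has $V^+_p(\vep)=\{(\bx,z)\in B^{n+1}_p(\vep): z\ge z(\bx)\}$ and $B^+_p(\vep)=\{(\bx,z)\in B^{n+1}_p(\vep): z\ge 0\}$. These coincide except over the points whose height lies between $0$ and $z(\bx)$. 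Writing the difference as an integral over this symmetric difference and integrating in $z$ first by Fubini, the sign bookkeeping (for $z(\bx)\ge 0$ the intervening slab lies in $B^+_p(\vep)$ but not $V^+_p(\vep)$, and for $z(\bx)<0$ it lies in $V^+_p(\vep)$ but not $B^+_p(\vep)$) gives, for every $\bx$ whose vertical segment from $0$ to $z(\bx)$ lies inside the ball, the common fiber value $-\int_0^{z(\bx)} f(\bx,z)\,dz$.

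This already yields the stated main term, up to two discrepancies that I would bound separately. Replacing the true graph $z(\bx)$ by its osculating value $\tfrac12\sum_\mu \ka_\mu x_\mu^2$ changes each inner integral by $\int f\,dz$ over a $z$-interval of length $\cO(\rho^3)$, on which $f=\cO(\rho^{k+2l})$; integrating $\cO(\rho^{k+2l+3})$ against $\rho^{n-1}\,d\rho$ over $[0,\vep]$ produces an error of order $\vep^{k+2l+n+3}$, exactly the claimed remainder. The second discrepancy is a boundary layer: for $\rho$ within $\cO(\vep^3)$ of $\vep$ the paraboloid height $z(\bx)\sim\rho^2$ already exceeds $\sqrt{\vep^2-\rho^2}$, so there the vertical fiber is truncated by the spherical cap instead of reaching $z(\bx)$, and likewise the true projected domain differs from $B^n_p(\vep)$. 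Bounding the per-fiber error by the full $\int_0^{z(\bx)}|f|\,dz=\cO(\vep^{k+2l+2})$ and the layer measure by $\cO(\vep^3\cdot\vep^{n-1})=\cO(\vep^{n+2})$, this contributes only at order $\vep^{k+2l+n+4}$ and is absorbed into the remainder. Since the surviving main term is of order $\vep^{k+2l+n+2}$, collecting everything gives the asserted identity.

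The step I expect to be the real obstacle is the boundary-layer control: one must confirm that the locus where the osculating paraboloid leaves the ball is a radial layer of width only $\cO(\vep^3)$, which follows by solving $\rho^2\sim\sqrt{\vep^2-\rho^2}$ near $\rho=\vep$, and that both the spherical truncation of the $z$-integral there and the mismatch between the exact projected domain and $B^n_p(\vep)$ stay strictly below the target order $\vep^{k+2l+n+3}$. By contrast, the interior sign analysis and the Taylor replacement of $z(\bx)$ are routine once the cylindrical orders are carried through.
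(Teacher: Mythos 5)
Your proposal is correct and takes essentially the same route as the paper's own proof: the identical decomposition of $\int_{V^+_p(\vep)} f$ as the half-ball integral minus the slab integral over $B^{n}_p(\vep)$, with the same two error sources quantified at the same orders (the boundary-layer over-subtraction where the graph exits the sphere, $\cO(\vep^{k+2l+n+4})$, and the dominant Taylor-truncation error from replacing $z(\bx)$ by its osculating paraboloid, $\cO(\vep^{k+2l+n+3})$). Your symmetric-difference/Fubini sign bookkeeping is simply a more explicit phrasing of the paper's argument via the region $\Omega$ lying outside the sphere but inside the cylinder.
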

\begin{proof}
We are going to approximate $z(\bx)$ by its osculating paraboloid at the origin $\frac{1}{2}\sum_{\mu=1}^n\ka_\mu x^2_\mu$, and remove from the complete half-ball integral of $f(\bs{X})$ its contribution from below the paraboloid but, since what we know is the function over the tangent space at $p$, what can be computed is the contribution below the paraboloid over a domain in the tangent space that is explicitly integrable. The exact domain is determined by the sphere intersection with the hypersurface, $\{\|\bx\|^2 + z(\bx)^2\leq\vep^2\}$, and what we can compute exactly is the integral over the cylinder $\{\rho\leq\vep\}$, so that for every $\bx\in B^{(n)}_p(\vep)\subset T_p\cS$, we can remove the contribution of $\int_0^z f(\bx,z)dz$. This results in the approximation:
$$
\int_{V^+_p(\vep)} f(\bs{X})\text{dVol} \approx \int_{B^+_p(\vep)}f(\bs{X})\text{dVol}\; -\; \int_{B^{n}_p(\vep)}\left[ \int_{z=0}^{z(\bx)} f(\bx,z(\bx))\, dz\right]\, d^n\bx .
$$
What we need to find is the order of the error in this expression. The volume in the second integral extends outside the ball that defines $V^+_p(\vep)$, which is inscribed in the cylinder, and thus the integral below the hypersurface is subtracting an extra contribution from the region $\Omega$, that lies outside the sphere but inside the cylinder and is bounded by the hypersurface. Then
$$
\int_\Omega f(\bs{X})\text{dVol}\;\; \leq\;\; \max_{\bs{X}\in\Omega}|f(\bs{X})|\cdot \text{Vol}(\Omega).
$$
Since $z(\rho\bs{\cx})\sim\cO(\rho^2)$, we have $\max_{\bs{X}\in\Omega}|f(\bs{X})|\sim\cO(\rho^k(\rho^2)^l)$. To bound the volume of $\Omega$, notice $\rho$ is bounded by $\vep$ from the cylinder and by approximately $\vep-C\vep^3$ from the intersection of the sphere with the hypersurface, for some constant $C$ (cf. lemma \ref{lem:boundary} below or the estimation in \cite{hulin2003}). This maximum thickness $\cO(\vep^3)$ is added up for every point of the base sphere, whose area is $\sim\cO(\vep^{n-1})$. Now, the maximum height in the $z$ direction of $\Omega$ is of order $\cO(\vep^2)$ because it is given by the intersection of the cylinder with the hypersurface. Therefore, $\text{Vol}(\Omega)\sim\cO(\vep^2\vep^{n-1}\vep^3)\sim\cO(\vep^{n+4})$. The total error of this approximation is then $\cO(\vep^{k+2l+n+4})$
Finally, the graph function $z(\bx)$ is to be approximated by its osculating paraboloid, truncating the terms $\cO(\rho^3)$ from its Taylor series. This makes a new error in the second integral of our formula, given by the integral over the region inbetween the paraboloid and the actual hypersurface, which has height given by the $\cO(\rho^3)$ difference between the full series of $z$ and the quadratic terms. Therefore, the integral we are neglecting by this truncation makes an error
$$
\int_{\SS^{n-1}}\;\int_{\rho=0}^{\rho=\vep}\cO(\rho^k(\rho^2)^l)\cO(\rho^3)\rho^{n-1}d\rho\,d\,\SS\sim \cO(\vep^{k+2l+n+3})
$$
which is the leading order of the two errors for small $\vep>0$.
\end{proof}

The key idea of the approximations carried out in the previous lemma were developed by \cite{hulin2003} precisely to obtain the first integral invariant.

\begin{proposition}[Hulin and Troyanov]
	The volume of the spherical component cut by a hypersurface has the asymptotic expansion
	\begin{equation}\label{eq:volSph}
	V(V^+_p(\vep))=\frac{V_{n+1}(\vep)}{2} - \frac{\vep^2\,V_n(\vep)}{2(n+2)}H + \cO(\vep^{n+3}).
	\end{equation} 
\end{proposition}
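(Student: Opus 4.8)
The plan is to apply Lemma \ref{lem:intApprox} directly with the constant function $f\equiv 1$, for which $k=l=0$, so that its error term is exactly $\cO(\vep^{n+3})$, matching the claimed remainder. With $f\equiv 1$ the two pieces furnished by the lemma become explicit. The half-ball integral $\int_{B^+_p(\vep)} 1\,\text{dVol}$ is simply half the volume of the $(n+1)$-ball, giving the leading term $\tfrac12 V_{n+1}(\vep)$. The inner $z$-integral of the correction collapses to the height of the osculating paraboloid, $\int_{0}^{\frac12\sum_\mu \ka_\mu x_\mu^2} dz=\tfrac12\sum_{\mu=1}^n \ka_\mu x_\mu^2$; note that in the principal basis this contains only pure squares, so no cross terms ever arise.

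It then remains to integrate this paraboloid height over the flat disk $B^{n}_p(\vep)\subset T_p\cS$. I would pull the curvatures out of the integral and reduce everything to the second moments $\int_{B^{n}_p(\vep)} x_\mu^2\,d^n\bx$. By the rotational symmetry of the ball these moments are independent of $\mu$, so each equals $\tfrac1n\int_{B^{n}_p(\vep)}\|\bx\|^2\,d^n\bx$; passing to polar coordinates (or invoking the monomial-integral formulas of the appendix) gives $\int_{B^{n}_p(\vep)}\rho^2\,d^n\bx=\frac{n}{n+2}\vep^2 V_n(\vep)$, hence $\int_{B^{n}_p(\vep)} x_\mu^2\,d^n\bx=\frac{\vep^2}{n+2}V_n(\vep)$ for every $\mu$.

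Assembling the pieces, the correction term becomes
\[
\frac12\sum_{\mu=1}^n \ka_\mu\cdot\frac{\vep^2}{n+2}V_n(\vep)=\frac{\vep^2 V_n(\vep)}{2(n+2)}\sum_{\mu=1}^n\ka_\mu=\frac{\vep^2 V_n(\vep)}{2(n+2)}\,H,
\]
where the last identity is the definition $H=\sum_\mu\ka_\mu$ of the mean curvature. Subtracting this from the leading half-ball term yields the stated expansion.

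The computation is short once Lemma \ref{lem:intApprox} is in hand, so the only points requiring care are bookkeeping ones, and I do not expect a substantive obstacle. First I would confirm the orientation convention: since $V^+_p(\vep)$ is the side into which $\bN(p)$ (the $+z$ direction) points, the correction subtracts the slab between $z=0$ and the paraboloid, and because $\int_0^{z(\bx)}dz=z(\bx)$ carries the sign of $z(\bx)$, the single formula handles regions of positive and negative curvature uniformly. Second, I would verify that the remainder genuinely stays at $\cO(\vep^{n+3})$ rather than being promoted by the integration, which is guaranteed by feeding $k=l=0$ into the error estimate of the lemma. The genuine analytic content — approximating the graph by its osculating paraboloid and bounding the sphere-versus-cylinder discrepancy — has already been discharged in Lemma \ref{lem:intApprox}.
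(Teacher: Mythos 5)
Your proposal is correct and follows essentially the same route as the paper's proof: apply Lemma \ref{lem:intApprox} with $f\equiv 1$, take $\tfrac12 V_{n+1}(\vep)$ for the half-ball, and reduce the paraboloid correction to the second moments $\int_{B^n_p(\vep)}x_\mu^2\,d^n\bx = \frac{\vep^2}{n+2}V_n(\vep)$ (the constant $D_2$ of the appendix), giving the term $\frac{\vep^2 V_n(\vep)}{2(n+2)}H$. Your extra remarks on the sign convention and on the error order $k=l=0$ are sound but are exactly what the paper's argument implicitly relies on, so there is nothing substantively different here.
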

\begin{proof}
	Using lemma \ref{lem:intApprox} the computation is immediate since $\int_{B^+_p(\vep)}\text{dVol}=\frac{V_{n+1}(\vep)}{2}$, and
$$
\int_{B^{n}_p(\vep)}\left[ \int_{z=0}^{z=\frac{1}{2}\sum_{\mu=1}^n\ka_\mu x^2_\mu } dz\right]\, d^n\bx =\frac{1}{2}\sum_{\mu=1}^n\ka_\mu\left[\int_{B^{n}_p(\vep)} x^2_\mu\, d^n\bx\right] =\frac{D_2}{2}\sum_{\mu=1}^n\ka_\mu .
$$\end{proof}

\begin{proposition}
	The barycenter of the spherical component is of the form:
\begin{equation}
	\bb(V^+_p(\vep)) = [\;0,\dots,0,\; 2\frac{V_n(\vep)}{V_{n+1}(\vep)}\frac{\vep^2}{n+2}\left( 1+ \frac{V_n(\vep)}{V_{n+1}(\vep)}\frac{\vep^2}{n+2} H \right) \; ]^T \; +\cO(\vep^3).
\end{equation}
\end{proposition}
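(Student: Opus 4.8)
The plan is to compute the two kinds of components of $\bb(V^+_p(\vep)) = \frac{1}{V(V^+_p(\vep))}\int_{V^+_p(\vep)}\bs{X}\,\text{dVol}$ separately: the $n$ tangential moments $\int x_\mu\,\text{dVol}$ and the single normal moment $\int z\,\text{dVol}$. In each case the numerator is evaluated with Lemma \ref{lem:intApprox}, which reduces the integral over the curved region to the half-ball integral over $B^+_p(\vep)$ minus an explicit below-paraboloid correction over $B^n_p(\vep)\subset T_p\cS$, and the denominator is supplied by the volume expansion \eqref{eq:volSph}. Throughout I abbreviate $\al := \tfrac{V_n(\vep)}{V_{n+1}(\vep)}\tfrac{\vep^2}{n+2}$; since $V_n(\vep)\sim\vep^n$ and $V_{n+1}(\vep)\sim\vep^{n+1}$ we have $\al=\cO(\vep)$, so the claimed expression $2\al(1+\al H)$ carries a leading $\cO(\vep)$ term and an $\cO(\vep^2)$ correction, and I must retain everything through order $\vep^2$.

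For the tangential components I would argue by symmetry. Taking $f=x_\mu$, of order $\cO(\rho^1 z^0)$ so that $k=1,\,l=0$ and Lemma \ref{lem:intApprox} carries error $\cO(\vep^{n+4})$, the half-ball term $\int_{B^+_p(\vep)}x_\mu\,\text{dVol}$ vanishes because $B^+_p(\vep)$ is invariant under $x_\mu\mapsto-x_\mu$ while the integrand is odd. The correction $\int_{B^n_p(\vep)}x_\mu\,z(\bx)\,d^n\bx=\tfrac12\sum_\nu\ka_\nu\int_{B^n_p(\vep)}x_\mu x_\nu^2\,d^n\bx$ vanishes termwise, each summand being odd in $x_\mu$. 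Hence $\int_{V^+_p(\vep)}x_\mu\,\text{dVol}=\cO(\vep^{n+4})$, and dividing by $V(V^+_p(\vep))\sim\tfrac12 V_{n+1}(\vep)\sim\vep^{n+1}$ gives $\bb_\mu=\cO(\vep^3)$, which accounts for the leading zeros in the stated vector.

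For the normal component, taking $f=z$ (so $k=0,\,l=1$ and the lemma's error is $\cO(\vep^{n+5})$), I would first evaluate the half-ball term exactly: the inner fibre integral $\int_0^{\sqrt{\vep^2-\|\bx\|^2}}z\,dz=\tfrac12(\vep^2-\|\bx\|^2)$ followed by $\int_{B^n_p(\vep)}\|\bx\|^2\,d^n\bx=\tfrac{n}{n+2}\vep^2V_n(\vep)$ yields $\int_{B^+_p(\vep)}z\,\text{dVol}=\tfrac{\vep^2}{n+2}V_n(\vep)=\al\,V_{n+1}(\vep)$. The below-paraboloid correction is $\int_{B^n_p(\vep)}\tfrac12 z(\bx)^2\,d^n\bx$, whose integrand is of degree four in $\bx$ and hence $\cO(\vep^{n+4})$, so the numerator is $\al V_{n+1}(\vep)\bigl(1+\cO(\vep^2)\bigr)$. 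Dividing by the denominator, which \eqref{eq:volSph} presents as $V(V^+_p(\vep))=\tfrac12 V_{n+1}(\vep)\bigl(1-\al H+\cO(\vep^2)\bigr)$, and expanding the geometric series $(1-\al H+\cO(\vep^2))^{-1}=1+\al H+\cO(\vep^2)$, gives $\bb_{n+1}=2\al(1+\al H)+\cO(\vep^3)$, the asserted formula.

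The step I expect to be the main obstacle, or at least the conceptually delicate one, is the order bookkeeping that pins down where the mean curvature $H$ actually enters. One might anticipate the $H$-dependence arising from the curvature-laden below-paraboloid correction in the numerator, but that term is $\cO(\vep^{n+4})$ and collapses into the $\cO(\vep^3)$ error after normalization; the genuine $\cO(\vep^2)$ curvature contribution instead descends entirely from the $\al H$ term of the volume expansion in the denominator, surviving only because it multiplies the leading $2\al=\cO(\vep)$ part of the numerator. Verifying that all discarded pieces, namely the lemma's intrinsic error, the quartic correction, and the higher-order terms of the geometric series, are uniformly $\cO(\vep^3)$, and that no $\cO(\vep^2)$ contribution is inadvertently dropped, is the crux of making the expansion rigorous.
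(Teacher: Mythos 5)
Your proposal is correct and follows essentially the same route as the paper: both apply Lemma \ref{lem:intApprox}, kill the tangential moments by parity, reduce the normal moment to the half-ball first moment $\frac{\vep^2}{n+2}V_n(\vep)$ (which you compute directly rather than citing $D_1^{(n+1)}$ from the appendix) while discarding the $\cO(\vep^{n+4})$ below-paraboloid correction, and then divide by the volume expansion \eqref{eq:volSph}. Your explicit geometric-series inversion of the denominator, and the accompanying remark that the $H$-dependence enters entirely through the volume rather than the numerator, is exactly the ``inverting the volume formula'' step the paper states more tersely.
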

\begin{proof}
Notice that $\int_{V^+_p(\vep)}\bx\,\text{dVol} = \cO(\vep^{n+4})$ because applying lemma \ref{lem:intApprox}, $\int_{B^+_p(\vep)}\bx\,d^n\bx\,dz=0$, and the second integral is also of monomials of odd degree. We get right away the normal component
$$
	[V(V^+_p(\vep))\bb(V^+_p(\vep))]_z = \int_{B^+_p(\vep)}z\,d^n\bx\,dz -\int_{B^n_p(\vep)}\frac{1}{2}\left[ \sum_{\mu=1}^n\ka_\mu x^2_\mu \right]^2\,d^n\bx+\cO(\vep^{n+4}) = D^{(n+1)}_1+\cO(\vep^{n+4})
$$
where we have discarded the second integral since its order is $\cO(D^{(n)}_4)=\cO(D^{(n)}_{22})\sim\cO(\vep^{n+4})$, which leaves the same order $\cO(\vep^3)$ as the error after dividing by the volume. The final expression follows from inverting the volume formula from the previous proposition and using the value of $D_1^{(n+1)}$ from the appendix.
\end{proof}

\begin{theorem}\label{th:sphComp}
	The covariance matrix $C(V^+_p(\vep))$ has eigenvalues with the following series expansion, for all $\mu=1,\dots,n$:
\begin{align}\label{eq:eigenSphComp}
	& \lbd_{\mu}(V^+_p(\vep)) = V_{n+1}(\vep)\frac{\vep^2}{2(n+3)}-V_n(\vep)\frac{\vep^4}{2(n+2)(n+4)}(2\ka_\mu +H) +\cO(\vep^{n+5}), \\[3mm]
	& \lbd_{n+1}(V^+_p(\vep)) = V_{n+1}(\vep)\frac{\vep^2}{2(n+3)}- 2\frac{V_n(\vep)^2}{V_{n+1}(\vep)}\frac{\vep^4}{(n+2)^2}\left(1+ \frac{V_n(\vep)}{V_{n+1}(\vep)}\frac{\vep^2}{n+2} H\right) + \cO(\vep^{n+5}).	
\end{align}
Moreover, in the limit $\vep\rightarrow 0^+$, when the principal curvatures are different, the corresponding eigenvectors $\bE_\mu(V^+_p(\vep))$ converge linearly to the principal directions of $\cS$ at $p$, and $\bE_{n+1}(V^+_p(\vep))$ converges quadratically to the hypersurface normal vector $\bN$ at $p$.
\end{theorem}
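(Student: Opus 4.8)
The plan is to compute the covariance matrix directly in the orthonormal principal frame $\{\bE_1,\dots,\bE_n,\bN\}$ adapted to $p$, in which $\bs{X}=(x_1,\dots,x_n,z)$ and the graph is $z(\bx)=\frac12\sum_\mu\ka_\mu x_\mu^2+\cO(\bx^3)$, and then to read off the eigensystem by perturbation theory. Writing $\bb(V^+_p(\vep))=(b_1,\dots,b_n,s_z)$, the previous two propositions give $b_\mu=\cO(\vep^3)$ and $s_z=2\frac{V_n}{V_{n+1}}\frac{\vep^2}{n+2}(1+\cdots)$. I would split $C(V^+_p(\vep))$ into its tangential block $C_{\mu\nu}=\int_{V^+_p(\vep)}(x_\mu-b_\mu)(x_\nu-b_\nu)\,\mathrm{dVol}$, the coupling vector $C_{\mu,n+1}=\int_{V^+_p(\vep)}(x_\mu-b_\mu)(z-s_z)\,\mathrm{dVol}$, and the normal entry $C_{n+1,n+1}=\int_{V^+_p(\vep)}(z-s_z)^2\,\mathrm{dVol}$. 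Every entry is evaluated with Lemma \ref{lem:intApprox}, which reduces each integral to a half-ball monomial integral minus a correction over $B^{n}_p(\vep)$ governed by the osculating paraboloid, and hence to the elementary monomial integrals $D_2,D_4,D_{22}$ of the appendix.

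For the tangential block I would take $f=(x_\mu-b_\mu)(x_\nu-b_\nu)$; the half-ball piece is $\frac12\del_{\mu\nu}D_2^{(n+1)}$ and the paraboloid correction is $\frac12\int_{B^{n}_p(\vep)}x_\mu x_\nu\sum_\al\ka_\al x_\al^2$, which is diagonal and, after inserting $D_4^{(n)}=3D_{22}^{(n)}$ and regrouping $3\ka_\mu+\sum_{\al\ne\mu}\ka_\al=2\ka_\mu+H$, reproduces exactly the stated $\lbd_\mu$ up to the Lemma's error $\cO(\vep^{n+5})$. The normal entry uses $\int_{V^+_p(\vep)}(z-s_z)^2=\int_{V^+_p(\vep)}z^2-V\,s_z^2$ (since $\int z=Vs_z$); the first term is $\frac{\vep^2}{2(n+3)}V_{n+1}$ up to $\cO(\vep^{n+6})$, and substituting the volume and barycenter expansions into $V s_z^2$ yields the second line of the theorem up to $\cO(\vep^{n+5})$. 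The off-diagonal tangential entries ($\mu\ne\nu$) and the whole coupling vector vanish at leading order because the paraboloid is even in each $x_\mu$, so the surviving contributions come only from the cubic part of $z(\bx)$ (and from $Vb_\mu s_z$), giving $C_{\mu\nu}=\cO(\vep^{n+5})$ for $\mu\ne\nu$ and $C_{\mu,n+1}=\cO(\vep^{n+5})$.

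With these orders in hand the spectral claims follow from symmetric perturbation theory. The matrix is $\diag(\lbd_1,\dots,\lbd_{n+1})$ plus an off-diagonal error $E$ with $\|E\|=\cO(\vep^{n+5})$, so Weyl's inequality gives that each eigenvalue equals the corresponding diagonal entry up to $\cO(\vep^{n+5})$, establishing the expansions \eqref{eq:eigenSphComp}. For the eigenvectors I would use a gap (Davis--Kahan) estimate: $\bN$ is isolated because $\lbd_{n+1}$ differs from the common tangential leading value $\frac{\vep^2}{2(n+3)}V_{n+1}$ already at order $\vep^{n+3}$, so its deviation is controlled by $\|E\|/\text{gap}=\cO(\vep^{n+5})/\cO(\vep^{n+3})=\cO(\vep^2)$, i.e.\ quadratic convergence to $\bN$. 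The $n$-dimensional tangential block, however, is \emph{degenerate at leading order}: the principal directions only become distinguished through the order-$\vep^{n+4}$ splitting $\lbd_\mu-\lbd_\nu\propto\vep^4 V_n(\ka_\nu-\ka_\mu)$, so when the $\ka_\mu$ are pairwise distinct the within-block off-diagonal error $\cO(\vep^{n+5})$ over this $\cO(\vep^{n+4})$ gap perturbs the eigenvectors by $\cO(\vep)$, giving linear convergence to the principal directions (the coupling to $\bN$ contributes only $\cO(\vep^2)$ and is subdominant).

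The main obstacle is exactly this degeneracy of the tangential block: one cannot apply naive first-order eigenvector perturbation around a multiple of the identity, and must instead argue that it is the curvature-induced splitting at order $\vep^{n+4}$ that selects the principal frame, then compare that gap against the $\cO(\vep^{n+5})$ perturbations. The supporting technical point is the parity bookkeeping showing that all leading off-diagonal and coupling contributions cancel by the evenness of the osculating paraboloid, so that the true coupling is one order smaller than the degeneracy-breaking gap; verifying that the neglected cubic terms and the Lemma \ref{lem:intApprox} error indeed sit at $\cO(\vep^{n+5})$, and not lower, is the calculation that must be carried out with care.
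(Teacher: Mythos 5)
Your proposal is correct and follows essentially the same route as the paper's own proof: the same adapted principal frame, the same reduction of every entry of $C(V^+_p(\vep))$ to half-ball and paraboloid monomial integrals via Lemma \ref{lem:intApprox} (with $D_4=3D_{22}$ giving the $2\ka_\mu+H$ combination), and the same two-tier perturbation argument --- Weyl for the eigenvalues, then a Davis--Kahan $\sin\theta$ estimate comparing the $\cO(\vep^{n+5})$ off-diagonal error against the $\cO(\vep^{n+3})$ normal gap and the curvature-induced $\cO(\vep^{n+4}(\ka_\mu-\ka_\nu))$ tangential splitting. Your identification of the leading-order degeneracy of the tangential block as the key obstacle is precisely the point the paper's proof handles the same way.
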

\begin{proof}
Working in the basis formed by the principal directions and the normal vector of the hypersurface at the fixed point $p$, we shall compute the entries of the covariance matrix and see that it is diagonal to all orders smaller than $\cO(\vep^{n+5})$, precisely the error we get in the diagonal elements, therefore the eigenvalues coincide with those diagonal terms up to that error since differences between eigenvalues of symmetric matrices are bounded by the matrix norm distance. 
The covariance matrix splits into the first two terms of
$$
C(V^+_p(\vep)) =\int_{V^+_p(\vep)} \bs{X}\otimes\bs{X}^T\;\text{dVol} -\int_{V^+_p(\vep)}\bs{X}\otimes\bb^T\;\text{dVol} -\int_{V^+_p(\vep)} \bb\otimes\bs{X}^T\;\text{dVol} + \int_{V^+_p(\vep)} \bb\otimes\bb^T\;\text{dVol},
$$
because the last three terms become the same upon integration. To compute the term left we can use the expression for $V\bb$ from the proof of the barycenter formula to get:
\begin{align*}
	\int_{V^+_p(\vep)} \bb\otimes\bb^T\;\text{dVol} = V(V^+_p(\vep))\bb \otimes\bb^T = 
\left( \begin{array}{@{}c|c@{}}
   \begin{matrix}
       &  &   \\
       & \cO(\vep^{n+7})_{n\times n} &  \\
       &  &  
   \end{matrix} 
      & \cO(\vep^{n+5})_{n\times 1} \\
   \cmidrule[0.4pt]{1-2}
   \cO(\vep^{n+5})_{1\times n} & V(V^+_p(\vep))s^2_z \\
\end{array} \right)
\end{align*}
where
$$
V(V^+_p(\vep))s^2_z = \frac{[D_1^{(n+1)}]^2}{V(V^+_p(\vep))} + \cO(\vep^{n+5}).
$$
The other only contribution to the last entry of the covariance matrix is
$$
\int_{V^+_p(\vep)} z^2\;\text{dVol} = \int_{B^+_p(\vep)} z^2\, d^n\bx\, dz -\frac{1}{24}\int_{B^n_p(\vep)}\left[ \sum_{\mu=1}^n\ka_\mu x^2_\mu \right]^3\,d^n\bx +\cO(\vep^{n+7}) = \frac{D^{(n+1)}_2}{2}+\cO(\vep^{n+6}),
$$
in which we have neglected the second integral for being of higher order than the barycenter matrix error, whose subtraction yields the stated result for the normal eigenvalue. Notice that the other elements in the last column and row of the complete covariance matrix are $\cO(\vep^{n+5})$ since the remaining contributions come from $\int_{V^+_p(\vep)}x_\mu z\; \text{dVol} \sim\cO(\vep^{n+6})$, and its approximation formula has all monomials with odd powers in $x$.

Now, we compute the tangent coordinates block. This can be done at once for any $\mu,\nu=1,\dots,n$, noticing that when $\mu\neq\nu$, the integrals of lemma \ref{lem:intApprox} are of monomials of odd degree in tangent coordinates so the off-diagonal elements are $\cO(\vep^{n+5})$:
\begin{align*}
& \int_{V^+_p(\vep)} x_\mu^2 \;\text{dVol}  = \int_{B^+_p(\vep)}x^2_\mu\, d^n\bx\,dz - \int_{B^{n}_p(\vep)}x_\mu^2\left(\frac{1}{2}\sum_{\al=1}^n\ka_\al x_\al^2\right) \, d^n\bx + \cO(\vep^{n+5}) \\
& = \frac{D^{(n+1)}_2}{2}-  \frac{1}{2}\int_{B^{n}_p(\vep)}\left(\ka_\mu x^4_\mu+\sum_{\al\neq\mu}\ka_\al x^2_\al x^2_\mu\right)+  \cO(\vep^{n+5}) \\
& = \frac{D^{(n+1)}_2}{2}-\frac{D^{(n)}_4}{2}\ka_\mu-\frac{D^{(n)}_{22}}{2}\sum_{\al\neq\mu}\ka_\al + \cO(\vep^{n+5}) =\frac{D^{(n+1)}_2}{2} -\frac{D^{(n)}_{22}}{2}(2\ka_\mu + H) +  \cO(\vep^{n+5})
\end{align*}
Here we have completed the last sum and used the fact that $D_4=3D_{22}$.

The perturbation theory of Hermitian matrices \cite{davis1970}, \cite{kato1982} shows the convergence of the eigenvectors to the principal directions in the case of no multiplicity: truncating $C(V^+_p(\vep))$ to order lower than $\cO(\vep^{n+5})$, that is precisely the order of the perturbation with respect to the exact diagonalized matrix. Fixing an eigenvalue $\lbd_\mu(V^+_p(\vep))$ with $\mu\neq n+1$, the minimum difference to the other eigenvalues is of order $\sim \vep^{n+4}(\ka_\mu -\ka_\nu)$, whereas for the last eigenvalue its distance to all the others is already at leading order $\sim \vep^{n+3}$. Therefore, from the $\sin\theta$ theorem \cite{davis1970}, the perturbation $\cO(\vep^{n+5})$ changes the eigenvectors $\{\bE_\mu(V^+_p(\vep))\}_{\mu=1}^n$ with respect to the principal directions as $\cO(\vep^{n+5})/\cO(\vep^{n+4}(\ka_\mu -\ka_\nu))\sim\frac{\vep}{\ka_\mu -\ka_\nu}$, and changes the eigenvector $\bE_{n+1}(V^+_p(\vep))$ with respect to the normal as $\cO(\vep^{n+5})/\cO(\vep^{n+3})\sim\vep^2$, i.e., in the limit $\vep\rightarrow 0^+$ the eigevectors of $C(V^+_p(\vep))$ get a vanishing correction with respect to the principal and normal directions. 
\end{proof}

Therefore, since the Weingarten operator $\hat{\bs{S}}$ at $p$ is $\diag(\ka_1(p),\dots,\ka_n(p))$ in our basis, we may write the covariance matrix as:
\begin{align*}
	C(V^+_p(\vep)) = \frac{V_{n+1}(\vep)\,\vep^2}{2(n+3)}\,\Id_{n+1}\; -\;\frac{V_n(\vep)\,\vep^4}{(n+2)(n+4)}
\left( \begin{array}{@{}c|c@{}}
   \begin{matrix}
       &  &   \\
       & \bs{\hat{S}} + \frac{H}{2}\Id_{n} &  \\
       &  &  
   \end{matrix} 
      & 0_{n\times 1} \\
   \cmidrule[0.4pt]{1-2}
   0_{1\times n} & 2\frac{V_n(\vep)(n+4)}{V_{n+1}(\vep)(n+2)} \\
\end{array} \right) \; +\; \cO(\vep^{n+5}).
\end{align*}

In \cite{pottmann2007}, the spherical shell $V^+_p(\vep)\cap \SS^{n}_p(\vep)$ is also considered for surfaces in $\RR^3$ following \cite{connolly1986}, and its invariants are shown to be just the derivative with respect to scale of those obtained for the ball region. This is due to the fact that the integral of a function over a region delimited by a ball is the radial integration of the corresponding result over spheres. The same property holds in our case, therefore the derivatives with respect to $\vep$ of the invariants in this section are the corresponding integral invariants of the $n$-dimensional spherical shell.


\section{Hypersurface Patch Integral Invariants}\label{sec:sphPatch}

Now, we shall compute the asymptotic expansions of the integral invariants of the hypersurface patch cut out by a ball centered at $p$ and radius $\vep >0$, i.e. over the domain $D_p(\vep)=\cS\cap B_p^{n+1}(\vep)$, using again the local graph representation in a small neighborhood around the point.

Since a parametrization of the region is needed to perform the integrals locally, we need to find local parametric equations of the boundary $\dd(\cS\cap B^{n+1}_p(\vep))$ to high enough order in $\vep$ so that we can expand asymptotically the integral invariants in terms of the geometric information of the hypersurface at the point. The strategy of \cite{pottmann2007}, hinted in \cite{hulin2003}, obtaining a cylindrical coordinate approximation for the boundary radius of the patch, works in general dimension as follows.

\begin{lemma}\label{lem:boundary}
	In cylindrical coordinates $(\rho,\phi_1,\dots,\phi_{n-1}, z)$ over the tangent space $T_p\cS$, fixing the basis to the principal directions and the normal vector of $\cS$ at $p$, the parametric equations of a point $\bs{X}=(\rho\cx_1,\dots,\rho\cx_n, z)^T$ in $\dd D_p(\vep)=\cS\cap \SS^{n}_p(\vep)$, are
\begin{equation}
	r(\bs{\cx}):=\rho(\cx_1,\dots,\cx_n) = \vep -\frac{1}{8}\ka^2(\bs{\cx})\vep^3 +\cO(\vep^4) ,\quad\quad z(\cx_1,\dots,\cx_n)=\frac{1}{2}\ka^2(\bs{\cx})\vep^2 +\cO(\vep^3),
\end{equation}
	where $\cx_1,\dots,\cx_n$ are the coordinates of points on $\SS^{n-1}\subset T_p\cS$, and
\begin{equation}
	\ka(\bs{\cx})=\ka(\cx_1,\dots,\cx_n) = \sum_{\mu=1}^n \ka_\mu\cx^2_\mu
\end{equation}
is the normal curvature of $\cS$ at $p$ in the direction of $\bs{\cx}$.
\end{lemma}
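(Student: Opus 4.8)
\emph{The plan is to} pin down a boundary point by the two conditions it must satisfy simultaneously: lying on the hypersurface $\cS$ and lying on the sphere $\SS^{n}_p(\vep)$. Working in the cylindrical coordinates over $T_p\cS$ adapted to the principal directions, I write such a point as $\bs{X}=(\rho\bs{\cx},z)$ with $\bs{\cx}\in\SS^{n-1}$. Lemma \ref{lem:localHyper} represents $\cS$ as the graph of its osculating paraboloid, so setting $\bx=\rho\bs{\cx}$ and using $\ka(\bs{\cx})=\sum_\mu\ka_\mu\cx_\mu^2$ gives the graph condition
\begin{equation*}
z=\tfrac{1}{2}\rho^2\,\ka(\bs{\cx})+\cO(\rho^3),
\end{equation*}
while membership in the sphere of radius $\vep$ centered at $p=\bze$ gives
\begin{equation*}
\rho^2+z^2=\vep^2 .
\end{equation*}
The task then reduces to solving this coupled pair for $\rho=r(\bs{\cx})$ and $z$ as asymptotic series in $\vep$, with the direction $\bs{\cx}$ playing the role of a fixed parameter.

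\emph{First I would} solve the system by bootstrapping order by order in $\vep$. Since $\rho\le\vep$, the graph condition forces $z=\cO(\vep^2)$, whence $z^2=\cO(\vep^4)$ and the sphere condition gives $\rho=\vep+\cO(\vep^3)$ at leading order. Substituting $\rho=\vep+\cO(\vep^3)$ back into the graph condition (so that $\rho^2=\vep^2+\cO(\vep^4)$) yields the height
\begin{equation*}
z=\tfrac{1}{2}\,\ka(\bs{\cx})\,\vep^2+\cO(\vep^3).
\end{equation*}
Feeding $z^2=\tfrac{1}{4}\ka(\bs{\cx})^2\vep^4+\cO(\vep^5)$ into $\rho^2=\vep^2-z^2$ and expanding via $\sqrt{1-u}=1-\tfrac{u}{2}+\cO(u^2)$ produces
\begin{equation*}
r(\bs{\cx})=\vep\Bigl(1-\tfrac{1}{8}\ka(\bs{\cx})^2\vep^2+\cO(\vep^3)\Bigr)=\vep-\tfrac{1}{8}\ka(\bs{\cx})^2\vep^3+\cO(\vep^4),
\end{equation*}
which is the claimed radial equation, the height having been obtained in the previous display.

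The algebra above is routine; \emph{the hard part will be} the error bookkeeping, which is where the care is needed. Concretely, I must check that the cubic remainder $\cO(\rho^3)$ of the graph representation, together with the cross terms generated when squaring $z$, contributes only at order $\cO(\vep^3)$ to the height and $\cO(\vep^4)$ to the radius, and so cannot corrupt the displayed coefficients $\tfrac12$ and $\tfrac18$. I would organize this as a contraction argument for the map $\rho\mapsto\sqrt{\vep^2-z(\rho\bs{\cx})^2}$ on a small interval around $\vep$: uniform smallness of its derivative for $\vep$ small gives a unique root and licenses the iteration from the seed $\rho=\vep$, each step sharpening the expansion by one order. The two points genuinely needing attention are that the implied constant in $\cO(\rho^3)$ is uniform in $\bs{\cx}\in\SS^{n-1}$ (true by compactness and smoothness of $z$) and that the uncomputed $\cO(\vep^3)$ correction to $r(\bs{\cx})$ cannot feed back to alter the $\vep^2$ coefficient of $z$, which a single re-substitution confirms. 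This same boundary estimate is exactly what underlies the thickness bound $\vep-C\vep^3$ invoked in the proof of Lemma \ref{lem:intApprox}.
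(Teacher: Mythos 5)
Your proposal is correct and follows essentially the same route as the paper: both combine the graph representation of Lemma \ref{lem:localHyper} with the sphere equation $\rho^2+z^2=\vep^2$ in the principal-direction cylindrical coordinates and expand in $\vep$, the only (inessential) difference being that the paper solves the resulting biquadratic $\tfrac14\ka(\bs{\cx})^2\rho^4+\rho^2-\vep^2+\cO(\rho^5)=0$ exactly and then expands, whereas you bootstrap from the seed $\rho=\vep$. Note in passing that your derivation gives $z=\tfrac12\ka(\bs{\cx})\vep^2+\cO(\vep^3)$, which is the form actually used in the later integrals, so the exponent on $\ka$ in the lemma's displayed $z$ equation is a typo.
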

\begin{proof}
	In this cylindrical coordinate system the expansion of the function that locally defines $\cS$ is
	$$
	z(\bx)=\frac{1}{2}\sum_{\mu=1}^n\ka_\mu x^2_\mu + \cO(x^3) =\frac{1}{2}\ka(\bs{\cx}) \rho^2 + \cO(\rho^3)
	$$ 
	since $x_\mu=\rho\cx_\mu$, and because $\II$ is diagonal in our basis with $\bs{\cx}$ a unit vector, equation \ref{eq:normalCurv} yields $\sum_{\mu=1}^n \ka_\mu\cx^2_\mu=\ka(\bs{\cx})$. Now, a point $\bs{X}=(\rho\cx_1,\dots,\rho\cx_n, z)^T$ in $\cS\cap \SS^{n}_p(\vep)$ satisfies the equation of the sphere $\|\bs{X}\| =\vep$, which in these coordinates is $\rho^2 + z^2 =\vep^2$.
	Substituting the expansion of $z(\bx)$ above, the resulting identity is
	$$
	\frac{1}{4}\ka(\bs{\cx})^2 \rho^4 + \rho^2 -\vep^2 + \cO(\rho^5) =0
	$$
	which up to order $4$ is a biquadratric equation in $\rho$ whose positive solution is
	$$
	\rho^2 = \frac{2}{\ka(\bs{\cx})^2}\left(-1 +\sqrt{1+\ka(\bs{\cx})^2\vep^2}\right) = \vep^2 -\frac{1}{4}\ka(\bs{\cx})^2\vep^4+\cO(\vep^6),
	$$
then taking roots once more the desired result obtains.
\end{proof}

For this type of domain the previous parametric expansions are enough to asymptotically expand both the integrand and the measure, collect terms and split integrals into those of the appendix \ref{sec:appendix}.

The area or mass of the domain can be expressed as a correction to the volume of the $n$-ball in terms of the extrinsic and intrinsic curvature of $\cS$ at the point. This compares with the case of the volume of a geodesic ball domain inside a manifold \cite{gray1979}, which exhibits a correction only dependent on the intrinsic scalar curvature $\cR$.

\begin{proposition}
	The $n$-dimensional area of the hypersurface patch has the asymptotic expansion
	\begin{equation}\label{eq:volPatch}
		V(D_p(\vep)) = V_n(\vep)\left[1+ \frac{\vep^2}{8(n+2)}(H^2-2\cR)  + \cO(\vep^3)\right].
	\end{equation}
\end{proposition}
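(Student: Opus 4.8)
The plan is to integrate the induced area element over the image of the patch under vertical projection to $T_p\cS$, keeping every term through order $\vep^{n+2}$. By Lemma~\ref{lem:localHyper} the hypersurface is, in the principal basis, the graph $z(\bx)=\tfrac12\sum_{\mu=1}^n\ka_\mu x_\mu^2+\cO(x^3)$, so $\dd_\mu z=\ka_\mu x_\mu+\cO(x^2)$ and the area element \eqref{eq:volElem} expands as
$$\sqrt{1+\sum_{\mu=1}^n(\dd_\mu z)^2}=1+\tfrac12\sum_{\mu=1}^n\ka_\mu^2 x_\mu^2+\cO(x^3).$$
Writing $\bx=\rho\bs{\cx}$ with $\bs{\cx}\in\SS^{n-1}$, the Euclidean measure on $T_p\cS$ is $\rho^{n-1}\,d\rho\,d\SS$ and the integrand becomes $\bigl(1+\tfrac12\rho^2\sum_\mu\ka_\mu^2\cx_\mu^2\bigr)\rho^{n-1}$, since the $\cO(x^3)$ remainder only contributes at $\cO(\vep^{n+3})$ after radial integration. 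The feature that distinguishes this from a plain ball integral is the domain of integration: the projected patch is not $B_p^n(\vep)$ but the star-shaped region whose radial extent in the direction $\bs{\cx}$ is the boundary radius $r(\bs{\cx})=\vep-\tfrac18\ka(\bs{\cx})^2\vep^3+\cO(\vep^4)$ furnished by Lemma~\ref{lem:boundary}, with $\ka(\bs{\cx})=\sum_\mu\ka_\mu\cx_\mu^2$.

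Next I would carry out the radial integral against this variable upper limit. The two pieces of the integrand give $\tfrac{1}{n}r(\bs{\cx})^n$ and $\tfrac{1}{2(n+2)}r(\bs{\cx})^{n+2}\sum_\mu\ka_\mu^2\cx_\mu^2$. Expanding $r^n=\vep^n\bigl(1-\tfrac{n}{8}\ka(\bs{\cx})^2\vep^2+\cO(\vep^3)\bigr)$ and $r^{n+2}=\vep^{n+2}\bigl(1+\cO(\vep^2)\bigr)$ reveals the key point: the non-spherical shape of the projected domain contributes a correction at exactly the same order $\vep^{n+2}$ as the metric distortion coming from the square root, so both must be tracked together. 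What remains is to integrate over $\SS^{n-1}$ the two angular densities $\sum_\mu\ka_\mu^2\cx_\mu^2$ and $\ka(\bs{\cx})^2=\bigl(\sum_\mu\ka_\mu\cx_\mu^2\bigr)^2$. Using the monomial integrals over $\SS^{n-1}$ from the appendix---in particular $\int_{\SS^{n-1}}\cx_\mu^4\,d\SS=3\int_{\SS^{n-1}}\cx_\mu^2\cx_\nu^2\,d\SS$ for $\mu\neq\nu$---these reduce to $\tfrac{\omega_{n-1}}{n}\sum_\mu\ka_\mu^2$ and $\tfrac{\omega_{n-1}}{n(n+2)}\bigl(3\sum_\mu\ka_\mu^2+\sum_{\mu\neq\nu}\ka_\mu\ka_\nu\bigr)$ respectively, where $\omega_{n-1}$ denotes the volume of $\SS^{n-1}$, so that $V_n(\vep)=\tfrac{\omega_{n-1}}{n}\vep^n$.

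Finally I would convert the symmetric functions of the $\ka_\mu$ into the intrinsic and extrinsic scalars. Equation~\eqref{eq:trS2} gives $\sum_\mu\ka_\mu^2=H^2-\cR$, and $\sum_{\mu\neq\nu}\ka_\mu\ka_\nu=H^2-\sum_\mu\ka_\mu^2=\cR$. Substituting and factoring out $V_n(\vep)$, the square-root (metric) contribution $+\tfrac{\vep^2}{2(n+2)}(H^2-\cR)$ and the domain-shape contribution $-\tfrac{\vep^2}{8(n+2)}(3H^2-2\cR)$ combine into $\tfrac{\vep^2}{8(n+2)}(H^2-2\cR)$, which is the asserted expansion \eqref{eq:volPatch}. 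I expect the only real obstacle to be the bookkeeping rather than anything conceptual: two $\vep^2$ effects of opposite sign must be carried in parallel, and one must verify that the cubic tail of $z(\bx)$ and the $\cO(\vep^4)$ tail of $r(\bs{\cx})$ genuinely enter only at $\cO(\vep^{n+3})$---the former also annihilated by parity of the angular integrals---so that they are legitimately absorbed into the error term.
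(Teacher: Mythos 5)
Your proposal is correct and follows essentially the same route as the paper's proof: expand the area element via the graph representation, integrate radially in cylindrical coordinates up to the boundary radius $r(\bs{\cx})$ of Lemma~\ref{lem:boundary}, track the two competing $\vep^{n+2}$ corrections (metric distortion and domain shrinkage), and evaluate the angular integrals with the appendix constants together with equation~\eqref{eq:trS2}. Your bookkeeping, including the combination $\tfrac{\vep^2}{2(n+2)}(H^2-\cR)-\tfrac{\vep^2}{8(n+2)}(3H^2-2\cR)=\tfrac{\vep^2}{8(n+2)}(H^2-2\cR)$, matches the paper's computation exactly.
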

\begin{proof}
	Using lemma \ref{lem:boundary} in equation \ref{eq:volElem}, we have that
$$
\text{dVol}\vert_{D_p(\vep)}=\sqrt{\det g(\bx)}\, dx_1\cdots dx_n = \left[1+\frac{1}{2}\sum_{\mu=1}^n\ka^2_\mu x^2_\mu + \cO(x^3)\right]dx_1\cdots dx_n,
$$
since $\sum_{\mu=1}^n\left(\frac{\dd z}{\dd x_\mu}\right)^2=\|\nabla z (\bx)\|^2$ can be considered small for small enough $\vep >0$, because in our coordinates $\nabla z(\bze)=0$. With this and the cylindrical measure, eq. \ref{eq:cylMeasure}, the integration becomes
\begin{align*}
& V(D_p(\vep)) = \int_{\cS\cap B^{n+1}_p(\vep)}\text{dVol}=\int_{\SS^{n-1}}d\,\SS\int_0^{r(\bs{\cx})}\left[ 1+\frac{1}{2}\sum_{\mu=1}^n\ka^2_\mu\rho^2\cx^2_\mu +\cO(\rho^3)\right]\rho^{n-1}\, d\rho \\
& = \int_{\SS^{n-1}}d\,\SS\left[ \frac{1}{n}( \vep -\frac{\ka(\bs{\cx})^2\vep^3}{8} +\cO(\vep^4))^n + \frac{1}{2}\sum_{\mu=1}^n\ka^2_\mu\cx^2_\mu\frac{1}{n+2}( \vep -\frac{\ka(\bs{\cx})^2\vep^3}{8} +\cO(\vep^4) )^{n+2} +\cO(\vep^{n+4})\right]
\end{align*}
after integrating over $\rho$ up to the boundary radius. Expanding the binomial series and the square of the normal curvature, all the remaining integrals are in example \ref{ex:constants}, leading to
\begin{align*}
 V(D_p(\vep)) & = \frac{\vep^n}{n}S_{n-1}-\frac{\vep^{n+2}}{8}\int_{\SS^{n-1}}\ka(\bs{\cx})^2\, d\,\SS + \frac{\vep^{n+2}}{2(n+2)}\sum_{\mu=1}^n\ka^2_\mu \int_{\SS^{n-1}}\cx^2_\mu\, d\,\SS + \cO(\vep^{n+3}) \\
& = V_n(\vep)-\frac{\vep^{n+2}}{8}\int_{\SS^{n-1}}d\,\SS\left( \sum_{\mu=1}^n\ka^2_\mu \cx^4_\mu +2 \sum_{\mu<\nu}^n\ka_\mu\ka_\nu\cx^2_\mu\cx^2_\nu \right) + \frac{C_2\,\vep^{n+2}}{2(n+2)}\sum_{\mu=1}^n\ka^2_\mu + \cO(\vep^{n+3}) \\
& = V_n(\vep) + \frac{\vep^{n+2}}{n+2}\left[ \left( \frac{C_2}{2}-\frac{n+2}{8}C_4 \right)\sum_{\mu=1}^n\ka^2_\mu - C_{22}\frac{n+2}{8}2 \sum_{\mu<\nu}^n\ka_\mu\ka_\nu \right]+ \cO(\vep^{n+3}) \\
& = V_n(\vep) + \frac{\vep^{n+2}}{n+2}\left[\frac{C_2}{8}(H^2-\cR)-\frac{C_2}{8}\cR \right] + \cO(\vep^{n+3}), 
\end{align*}
where we use equation \ref{eq:trS2} and the relations among the coefficients from the appendix.
\end{proof}

It is natural to expect the extrinsic curvature $H$ to be present in the second order correction since the domain is determined by a ball in ambient space, hence it depends on how $\cS$ is embedded, in contrast to an intrinsically defined geodesic ball where the correction only depends on $\cR$.

The center of mass in this case turns out to deviate, to leading order in $\vep$, only in the normal direction with respect to the center of the ball.

\begin{proposition}\label{prop:patchBary}
	The barycenter of the patch region has coordinates in the principal basis with respect to $p$ given by
\begin{equation}\label{eq:patchBary}
	\bb(D_p(\vep)) = [\,\cO(\vep^4),\dots, \cO(\vep^4),\, \frac{\vep^2}{2(n+2)}H +\cO(\vep^3)\,]^T.
\end{equation}
\end{proposition}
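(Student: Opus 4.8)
The plan is to work directly from the definition $\bb(D_p(\vep)) = \frac{1}{V(D_p(\vep))}\int_{D_p(\vep)}\bs{X}\,\text{dVol}$, using the volume $V(D_p(\vep)) = V_n(\vep)(1+\cO(\vep^2))$ already established in the preceding proposition. The task then reduces to the first moments $\int_{D_p(\vep)} x_\mu\,\text{dVol}$ for $\mu=1,\dots,n$ and $\int_{D_p(\vep)} z\,\text{dVol}$, which I would set up in cylindrical coordinates exactly as in the proof of the area formula, expanding the boundary radius $r(\bs{\cx})$ by Lemma \ref{lem:boundary} and the measure by equation \ref{eq:volElem}.

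For the tangent components the key is a parity argument. In the osculating-paraboloid approximation the graph $z$, the measure $\sqrt{\det g}$, and the boundary radius $r(\bs{\cx})$ all depend on the angular variables only through the squares $\cx_\al^2$ (since $\ka(\bs{\cx})=\sum_\al\ka_\al\cx_\al^2$), so each is invariant under the reflection $\cx_\mu\mapsto-\cx_\mu$. As $x_\mu=\rho\cx_\mu$ is odd under this reflection, $\int_{D_p(\vep)}x_\mu\,\text{dVol}$ vanishes identically at every order reached by this symmetric approximation; in particular the candidate contributions at radial orders $\vep^{n+1}$ and $\vep^{n+3}$ drop out. I would then track the first symmetry-breaking contribution, which comes from the cubic jet $\cO(x^3)$ of the graph: it enters the measure as an odd cubic form and induces an odd $\cO(\vep^4)$ correction to $r(\bs{\cx})$. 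Pairing $\cx_\mu$ (odd) with either of these (odd) factors yields an even angular integrand whose sphere integral first survives at radial order $\vep^{n+4}$; dividing by $V\sim\vep^n$ gives the claimed $\cO(\vep^4)$.

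For the normal component I would compute $\int_{D_p(\vep)}z\,\text{dVol}$ directly. Writing $z=\tfrac12\ka(\bs{\cx})\rho^2+\cO(\rho^3)$ and the measure as $1+\cO(\rho^2)$, the integral becomes $\int_{\SS^{n-1}}d\,\SS\int_0^{r(\bs{\cx})}\tfrac12\ka(\bs{\cx})\rho^{n+1}\,d\rho+\cdots$, whose leading term is $\frac{\vep^{n+2}}{2(n+2)}\int_{\SS^{n-1}}\ka(\bs{\cx})\,d\,\SS$. The appendix value $\int_{\SS^{n-1}}\cx_\mu^2\,d\,\SS=S_{n-1}/n$ gives $\int_{\SS^{n-1}}\ka(\bs{\cx})\,d\,\SS=\frac{S_{n-1}}{n}H$, so $\int_{D_p(\vep)}z\,\text{dVol}=\frac{S_{n-1}}{2n(n+2)}\vep^{n+2}H+\cO(\vep^{n+4})$; here the $\cO(\rho^3)$ piece of $z$ and the odd corrections in $r$ integrate to zero on the sphere and do not disturb this order. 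Dividing by $V_n(\vep)(1+\cO(\vep^2))=\frac{S_{n-1}}{n}\vep^n(1+\cO(\vep^2))$ then produces $\frac{\vep^2}{2(n+2)}H+\cO(\vep^3)$, as stated.

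The main obstacle is purely bookkeeping: certifying, through the parity of the angular integrands, that no tangent contribution survives below radial order $\vep^{n+4}$, and that the cubic jet of the graph enters $r(\bs{\cx})$ and the measure only at the orders claimed, so that after division by the volume the tangent coordinates are genuinely $\cO(\vep^4)$ and the normal error is $\cO(\vep^3)$.
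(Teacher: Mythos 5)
Your proposal is correct and follows essentially the same route as the paper's own proof: cylindrical coordinates with the boundary radius from Lemma \ref{lem:boundary}, a parity argument showing the tangent first moments first survive at radial order $\vep^{n+4}$ (the symmetry-breaking coming from the cubic jet, entering both the measure and the correction to $r(\bs{\cx})$), and a direct leading-order computation $\int_{D_p(\vep)}z\,\text{dVol}=C_2\frac{\vep^{n+2}}{2(n+2)}H+\cO(\vep^{n+3})$ followed by division by the volume. The only difference is cosmetic: your observation that the odd cubic terms also drop out of the normal moment yields a slightly sharper error there ($\cO(\vep^{n+4})$ rather than the paper's $\cO(\vep^{n+3})$), but both suffice for the stated $\cO(\vep^3)$ after normalization.
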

\begin{proof}
When integrating any tangent component $x_\al$ of $\bs{X}$, only factors with an odd power in some component are produced because the known terms (see previous proof) now contain products $\cx_\al\cx^2_\mu,\;\cx_\al\cx_\mu^4$ and $\cx_\al\cx_\mu^2\cx_\nu^2$, which always have an odd power factor regardless of the subindices combination. Therefore the first $n$ components of $V(D_p(\vep))\bb(D_p(\vep))$ are of order $\cO(\vep^{n+4})$, coming from the error inside $r(\bs{\cx})^{n+1}$ after integrating radially the first term $x_\al\rho^{n-1}d\rho$.
The normal component of $\bs{X}$ integrates as 
\begin{align*}
	\int_{\cS\cap B^{n+1}_p(\vep)}z\;\text{dVol} & =\int_{\SS^{n-1}}d\,\SS\int_0^{r(\bs{\cx})}\left[ \frac{1}{2}\ka(\bs{\cx})\rho^2+\cO(\rho^3) \right]\left[ 1+\frac{1}{2}\sum_{\mu=1}^n\ka^2_\mu\rho^2\cx^2_\mu +\cO(\rho^3)\right]\rho^{n-1}\, d\rho \\
& = \int_{\SS^{n-1}}d\,\SS\left[ \frac{\ka(\bs{\cx})}{2(n+2)}(\vep -\frac{\ka(\bs{\cx})^2}{8}\vep^3 +\cO(\vep^4))^{n+2} + \cO(\vep^{n+3}) \right] \\
& = \frac{\vep^{n+2}}{2(n+2)}\sum_{\mu=1}^n\ka_\mu\int_{\SS^{n-1}}\cx^2_\mu\,d\,\SS + \cO(\vep^{n+3}) = C_2\frac{\vep^{n+2}}{2(n+2)}H+ \cO(\vep^{n+3}).
\end{align*}
Then normalizing by the volume to lowest order cancels the coefficient $C_2\vep^{n}$.
\end{proof}

Finally, the study of the covariance matrix of the patch domain shows a behavior similar to the spherical component, but where the next-to-leading order contribution to the eigenvalues includes only products of principal curvatures and no linear terms.

\begin{theorem}
	The covariance matrix $C(D_p(\vep))$ has $n$ eigenvalues that scale like $\vep^{n+2}$ as
	\begin{equation}\label{eq:eigen1Patch}
		\lambda_\mu(D_p(\vep)) = V_n(\vep)\left[ \frac{\vep^2}{n+2} + \frac{\vep^4}{8(n+2)(n+4)}(H^2-2\cR -4H\ka_\mu)\right] +\cO(\vep^{n+5}),
	\end{equation}
	for all $\mu=1,\dots,n$, and one eigenvalue scaling as $\vep^{n+4}$ with leading term
	\begin{equation}\label{eq:eigen2Patch}
		\lambda_{n+1}(D_p(\vep)) = V_n(\vep)\frac{\vep^4}{2(n+2)(n+4)}\left(\frac{n+1}{n+2}H^2 - \cR \right) +\cO(\vep^{n+5}).	
	\end{equation}
	Moreover, in the limit $\vep\rightarrow 0^+$, if the principal curvatures at $p$ are all different, the  eigenvectors $\bE_\mu(D_p(\vep))$ corresponding to the first $n$ eigenvalues converge  to the principal directions of $\cS$ at $p$, and the last eigenvector $\bE_{n+1}(D_p(\vep))$ converges to the hypersurface normal vector $\bN(p)$.
\end{theorem}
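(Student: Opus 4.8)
The plan is to follow the proof of Theorem~\ref{th:sphComp} almost verbatim, replacing the spherical-component integrals by patch integrals expanded through Lemma~\ref{lem:boundary} and the area element~\eqref{eq:volElem}. I would fix the orthonormal basis of principal directions $\{\bE_\mu\}_{\mu=1}^n$ together with $\bN$ at $p$, write $\bs{X}=(\bx,z(\bx))$ with the osculating paraboloid $z(\bx)=\tfrac12\sum_\mu\ka_\mu x_\mu^2$, pass to cylindrical coordinates, and integrate $\rho$ up to the boundary radius $r(\bs{\cx})=\vep-\tfrac18\ka(\bs{\cx})^2\vep^3+\cO(\vep^4)$ supplied by Lemma~\ref{lem:boundary}. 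Each entry of $C(D_p(\vep))$ then collapses into angular integrals of sphere monomials listed in the appendix. The objective is to show that $C(D_p(\vep))$ is diagonal in this basis up to entries of order $\cO(\vep^{n+5})$, after which the eigenvalues agree with the diagonal entries and the eigenvectors with the basis vectors to the precision claimed.

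For the tangent block I would compute $\int_{D_p(\vep)}x_\mu x_\nu\,\text{dVol}$. When $\mu\neq\nu$ every monomial carries an odd power in some coordinate, so it integrates to zero over $\SS^{n-1}$ and the off-diagonal entries are pushed to the $\cO(\vep^{n+5})$ cubic-truncation error (the tangent barycenter components are only $\cO(\vep^4)$ by Proposition~\ref{prop:patchBary}, so their contribution is far smaller). For the diagonal term $\int x_\mu^2\,\text{dVol}$ the leading piece $\tfrac{\vep^{n+2}}{n+2}C_2=V_n(\vep)\tfrac{\vep^2}{n+2}$ reproduces the leading eigenvalue, and three next-to-leading contributions appear at order $\vep^{n+4}$: the expansion of $r(\bs{\cx})^{n+2}$, the curvature correction $\tfrac12\sum_\al\ka_\al^2\rho^2\cx_\al^2$ in the area element, and their interaction. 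Each produces degree-four and degree-six angular integrals; collecting them with the appendix constants and the identities $C_4=3C_{22}$, $C_6=5C_{42}=15C_{222}$, and then substituting $\sum_\mu\ka_\mu^2=H^2-\cR$ and $\cR=2K_2$ from~\eqref{eq:trS2}, I expect the bracket to reduce to $H^2-2\cR-4H\ka_\mu$, giving~\eqref{eq:eigen1Patch}.

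For the normal direction I would compute $\int_{D_p(\vep)}z^2\,\text{dVol}$, where $z^2=\tfrac14\ka(\bs{\cx})^2\rho^4$ first contributes at order $\vep^{n+4}$, and subtract the barycenter term $V(D_p(\vep))\,s_z^2$ with $s_z=\tfrac{\vep^2}{2(n+2)}H$ from Proposition~\ref{prop:patchBary}. Using $\int_{\SS^{n-1}}\ka(\bs{\cx})^2\,d\,\SS=C_{22}(3H^2-2\cR)$ for the first piece and $H^2$ for the second, the difference collapses exactly to $\tfrac{n+1}{n+2}H^2-\cR$, yielding~\eqref{eq:eigen2Patch}. The tangent--normal off-diagonal entries $\int x_\mu z\,\text{dVol}-V[\bb]_\mu[\bb]_z$ are again odd in $\bx$, hence of error order, so $C(D_p(\vep))$ is block diagonal to the stated precision.

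With the diagonal in hand I would close the argument by the Davis--Kahan $\sin\theta$ theorem~\cite{davis1970}, exactly as for the spherical component. The off-diagonal perturbation is $\cO(\vep^{n+5})$; the $n$ tangent eigenvalues are separated from each other by $\cO(\vep^{n+4}(\ka_\mu-\ka_\nu))$ and from the normal eigenvalue by $\cO(\vep^{n+2})$, so when the $\ka_\mu$ are distinct the tangent eigenvectors converge to the principal directions at the linear rate $\vep/(\ka_\mu-\ka_\nu)$, while $\bE_{n+1}(D_p(\vep))$ converges to $\bN$ at the cubic rate $\vep^3$. I expect the main obstacle to be the bookkeeping in the tangent block: the three independent next-to-leading sources must be expanded simultaneously and their degree-six angular integrals combined through the appendix identities so that the linear curvature dependence survives only as $-4H\ka_\mu$ while everything else packages into $H^2-2\cR$; confirming that all neglected terms are uniformly $\cO(\vep^{n+5})$ is precisely what legitimizes the subsequent perturbation step.
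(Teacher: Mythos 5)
Your computational outline for the diagonal entries and for the normal entry matches the paper's, and those constants work out as you expect. But the proof as proposed has a genuine gap, and it sits exactly where the paper flags the main subtlety of this theorem. Your claim that the tangent--normal entries $\int_{D_p(\vep)} x_\mu z\,\mathrm{dVol}$ are ``odd in $\bx$, hence of error order'' is false: it holds only if $z$ is \emph{exactly} the osculating paraboloid. The true graph function has cubic terms $\cO(x^3)$, and $x_\mu\cdot\cO(x^3)$ contains monomials with all even exponents (e.g.\ $x_\mu^4$, $x_\mu^2 x_\nu^2$) whose angular integrals do not vanish. These contribute to the $(\mu,n+1)$ entries at order $\cO(\vep^{n+4})$ --- the \emph{same} order as the eigenvalue corrections you are trying to establish, not $\cO(\vep^{n+5})$. (Equivalently: truncating $z$ to the paraboloid is an $\cO(\vep^{n+4})$ operation on these particular entries, unlike in Lemma \ref{lem:intApprox}, whose error estimate applies to the spherical component domain, not the patch.) The paper states this explicitly: the covariance matrix ``may not decompose at order $\cO(\vep^{n+4})$ as direct sum'' of a tangent block and a normal block, and the argument of Theorem \ref{th:sphComp} ``cannot be made here.''

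Once the off-diagonal perturbation is $\cO(\vep^{n+4})$ rather than $\cO(\vep^{n+5})$, your closing Davis--Kahan step collapses: for the tangent eigenvectors the perturbation and the relevant gaps $\cO(\vep^{n+4}(\ka_\mu-\ka_\nu))$ are of the same order, so the $\sin\theta$ bound gives $\cO(1)$ and proves no convergence at all; and a matrix-norm bound only pins the eigenvalues down to $\cO(\vep^{n+4})$, which is insufficient to justify equations \ref{eq:eigen1Patch} and \ref{eq:eigen2Patch}. What rescues the theorem is the \emph{structure} of the perturbation (it couples tangent to normal directions only, across a gap of order $\vep^{n+2}$), and exploiting this requires a finer argument than a norm bound. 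The paper does it via Rellich's analytic perturbation theory: expand the eigenvalue--eigenvector equation of $C(D_p(\vep))/V_n(\vep)$ as a power series in $\vep$, equate coefficients, and show at second order that the limiting tangent eigenvectors have vanishing normal component, so the off-diagonal block drops out of the fourth-order equation and the diagonal entries are indeed the eigenvalue expansions, with the eigenvector convergence obtained simultaneously. (A carefully stated second-order perturbation estimate, showing the off-diagonal coupling shifts eigenvalues only by $\cO(\vep^{n+6})$, could serve the same purpose --- but that is precisely the extra work your proposal omits.) Your claimed cubic rate for $\bE_{n+1}(D_p(\vep))\to\bN(p)$ rests on the same false premise; with the correct $\cO(\vep^{n+4})$ off-diagonal entries the mixing is $\cO(\vep^2)$.
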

\begin{proof}
We need to evaluate $\int_{D_p(\vep)} \bs{X}(\bx)\otimes \bs{X}(\bx)^T\;\sqrt{\det g}\, d^n\bx$ and $V(D_p(\vep))\bb(D_p(\vep))\otimes\bb(D_p(\vep))^T$. The latter can be obtained from the previous proof:
$$
[\,\cO(\vep^{n+4}),\dots, \cO(\vep^{n+4}),\, \frac{C_2\vep^{n+2}}{2(n+2)}H +\cO(\vep^{n+3})\,]^T\otimes [\,\cO(\vep^4),\dots, \cO(\vep^4),\, \frac{\vep^2}{2(n+2)}H +\cO(\vep^3)\,],
$$
resulting in all entries of the $n\times n$ block being $\cO(\vep^{n+8})$, the first $n$ elements of the last column and last row being $\cO(\vep^{n+6})$, and the last element of the matrix becoming
$$
 [V(D_p(\vep))\bb(D_p(\vep))\otimes\bb(D_p(\vep))^T]_{(n+1),(n+1)}=\frac{V_n(\vep)\,\vep^4}{4(n+2)^2}H^2 +\cO(\vep^{n+5}),
$$
(we already disregarded the term of $\cO(\vep^{n+6})$ that can be computed for this matrix entry because we shall see below that the other contributing term in that position has error at $\cO(\vep^{n+5})$).

Now, the rest of the covariance matrix requires the longest computations so far. The entries of $\bs{X}(\bx)\otimes \bs{X}(\bx)^T$ are of three types: $x_\mu x_\nu$, $x_\mu z(\bx)$ and $z(\bx)^2$. The first $n$ entries of the last column and last row, $x_\mu z(\bx)$, contribute at order $\cO(\vep^{n+4})$. This implies that the matrix may not decompose at order $\cO(\vep^{n+4})$ as direct sum of a "tangent" $n\times n$ block, the integrals of $[x_\mu x_\nu]$, and a "normal" $1\times 1$ block, the integral of $z(\bx)^2$. However, this will not affect the eigenvalue decomposition as we shall see below, and the eigenvalues will be given to order $\cO(\vep^{n+4})$ by the diagonals of these blocks.

The normal block entry is:
\begin{align*}
\int_{\cS\cap B^{n+1}_p(\vep)}z^2\;\text{dVol} & = \int_{\SS^{n-1}}d\,\SS\int_0^{r(\bs{\cx})}\left[ \frac{1}{4}\ka(\bs{\cx})^2\rho^4+\cO(\rho^5) \right]\left[ 1+\frac{1}{2}\sum_{\mu=1}^n\ka^2_\mu\rho^2\cx^2_\mu +\cO(\rho^3)\right]\rho^{n-1}\, d\rho \\
& = \int_{\SS^{n-1}}d\,\SS\int_0^{r(\bs{\cx})}\left[ \frac{1}{4}\sum_{\al,\bet=1}^n\ka_\al\ka_\bet\cx^2_\al\cx^2_\bet\,\rho^{n+3} + \cO(\rho^{n+4})\right]d\rho \\
& = \frac{1}{4}\left[\sum_{\al=1}^n\ka_\al^2\int_{\SS^{n-1}}\cx_\al^4\,d\,\SS +2\sum_{\al <\bet}^n\ka_\al\ka_\bet\int_{\SS^{n-1}}\cx_\al^2\cx^2_\bet\,d\,\SS\right]\frac{\vep^{n+4}}{n+4} +\cO(\vep^{n+5}) \\
& = \frac{\vep^{n+4}}{4(n+4)}\left[C_4(H^2-\cR) +C_{22}\cR\right] +\cO(\vep^{n+5});
\end{align*}
subtracting the contribution from the barycenter matrix term, the last eigenvalue becomes
$$
\lambda_{n+1}(p,\vep)=\frac{C_2\,\vep^{n+4}}{4(n+2)(n+4)}\left[3H^2-2\cR\right] - \frac{C_2\,\vep^{n+4}}{4(n+2)^2}H^2 +\cO(\vep^{n+5})
$$
which simplifies to the stated result.

The tangent block entries can be computed simultaneously considering arbitrary $\mu,\nu=1,\dots, n$:
\begin{align*}
& \int_{\cS\cap B^{n+1}_p(\vep)}x_\mu x_\nu\;\text{dVol} = \int_{\SS^{n-1}}d\,\SS\int_0^{r(\bs{\cx})}\rho^2\cx_\mu\cx_\nu\rho^{n-1}\left[ 1+\frac{1}{2}\sum_{\al=1}^n\ka^2_\al\rho^2\cx^2_\al +\cO(\rho^3)\right]\, d\rho \\
& =  \int_{\SS^{n-1}}\; d\,\SS \frac{\cx_\mu\cx_\nu}{n+2}(\vep-\frac{\ka(\bs{\cx})^2}{8}\vep^{3}+\cO(\vep^4))^{n+2} +  \frac{1}{2}\sum_{\al=1}^n\ka^2_\al\int_{\SS^{n-1}}\cx^2_\al\cx_\mu\cx_\nu\,d\,\SS\frac{\vep^{n+4}}{n+4}+\cO(\vep^{n+5}) \\
& = \frac{\vep^{n+2}}{n+2}\left[ \del_{\mu\nu}C_2-\frac{\vep^2(n+2)}{8}\int_{\SS^{n-1}}d\,\SS\,\cx_\mu\cx_\nu\left(\sum_{\al=1}^n\ka^2_\al\cx^4_\al + 2\sum_{\al <\bet}^n\ka_\al\ka_\bet\cx^2_\al\cx^2_\bet \right) \right] \; + \\
& \quad\quad\quad\quad\quad\quad\quad\quad\quad\quad + \frac{\vep^{n+4}}{n+4}\frac{\del_{\mu\nu}}{2}\left(\ka^2_\mu\int_{\SS^{n-1}}\cx^4_\mu\, d\,\SS +\sum_{\al\neq\mu}^n\ka^2_\al\int_{\SS^{n-1}}\cx^2_\al\cx^2_\mu\, d\,\SS \right) + \cO(\vep^{n+5}),
\end{align*}
where the $\del_{\mu\nu}$ appears because the monomials get an odd power if $\mu\neq\nu$. Now, the different integrals inside the indexed sums result in different constants depending on the different monomials that the terms $\cx^2_\mu\cx^4_\al$ and $\cx^2_\mu\cx^2_\al\cx^2_\bet$ can combine into, thus 
\begin{align*}
& = C_2\frac{\vep^{n+2}}{n+2}\del_{\mu\nu}+\frac{\vep^{n+4}}{n+4}\del_{\mu\nu}\left[ -\frac{n+4}{8}\left( C_6\ka^2_\mu + C_{24}\sum^n_{\al\neq\mu}\ka^2_\al + \sum^n_{\substack{\al,\bet \\ \al\neq\bet}}\ka_\al\ka_\bet(\del_{\al\mu}+\del_{\bet\mu})C_{24} \; + \right.\right. \\
& \quad\quad\quad\quad\quad\quad\quad\quad\quad\quad\quad\quad\quad\quad\; + \left.\left. \sum_{\substack{\al\neq\bet \\ \al,\bet\neq\mu}}^n\ka_{\al}\ka_{\bet}C_{222}\right) + \frac{C_4}{2}\ka^2_\mu + \frac{C_{22}}{2}\sum^n_{\al\neq\mu}\ka^2_\al \right] + \cO(\vep^{n+5})
\end{align*}
\begin{align*}
& = C_2\frac{\vep^{n+2}}{n+2}\del_{\mu\nu}+\frac{\vep^{n+4}}{n+4}\del_{\mu\nu}\left[ (\frac{C_4}{2}-\frac{n+4}{8}C_6)\ka^2_\mu + (\frac{C_{22}}{2}-\frac{n+4}{8}C_{24})\sum_{\al\neq\mu}^n\ka^2_\al \right. \\ 
& \quad\quad\quad\quad\quad\quad\quad\quad\quad\quad\quad\quad\quad\quad\;\left. -\frac{n+4}{8}( 2C_{24}\sum_{\al\neq\mu}^n\ka_\mu\ka_\al + C_{222}\sum_{\substack{\al\neq\bet \\ \al,\bet\neq\mu}}^n\ka_{\al}\ka_{\bet} ) \right] + \cO(\vep^{n+5}).
\end{align*}
Notice that the summations in the last equation are all over indices that must be different from $\mu$, so we can add and subtract the corresponding missing terms to those sums as long as we subtract them in the correct place. Doing this, and using the crucial relationships between the constants from the appendix, each of the different terms under the big braces simplify to:
\begin{align*}
& (\frac{C_4}{2}-\frac{n+4}{8}C_6-\frac{C_{22}}{2}+\frac{n+4}{8}C_{24})\, \ka^2_\mu = -\frac{C_2}{2(n+2)} \ka^2_\mu , \\[3mm]
& (\frac{C_{22}}{2}-\frac{n+4}{8}C_{24})\sum_{\al=1}^n\ka^2_\al = \frac{C_2}{8(n+2)}(H^2-\cR), \\
& -\frac{n+4}{8}( (2C_{24} -2C_{222})\sum_{\al\neq\mu}^n\ka_\mu\ka_\al + 2C_{222}\sum_{\al<\bet}^n\ka_{\al}\ka_{\bet}) = -\frac{C_2}{2(n+2)}R_{\mu\mu} + \frac{C_2}{8(n+2)}\cR.
\end{align*}
Finally, these add up into the expression
\begin{align*}
\int_{D_p(\vep)}x_\mu x_\nu\;\text{dVol} & = \del_{\mu\nu}V_n(\vep)\left[ \frac{\vep^2}{n+2}+\frac{\vep^4}{8(n+2)(n+4)}(-4\ka^2_\mu -4R_{\mu\mu} + H^2-2\cR) \right] +\cO(\vep^{n+5}),
\end{align*}
and since $\ka^2_\mu+R_{\mu\mu}=\ka_\mu H$, from equation \ref{eq:iiiForm}, the stated formula for the tangent eigenvalues follows from the diagonal of this block. Therefore, we can write $C(D^+_p(\vep))=$
\begin{align*}
\frac{V_n(\vep)\,\vep^2}{n+2}
\left( \begin{array}{@{}c|c@{}}
   \begin{matrix}
       &  &   \\
       &  \Id_{n} &  \\
       &  &  
   \end{matrix} 
      & 0_{n\times 1} \\
   \cmidrule[0.4pt]{1-2}
   0_{1\times n} & 0  \\
\end{array}  \right)
	+\frac{V_n(\vep)\,\vep^4}{2(n+2)(n+4)}
\left( \begin{array}{@{}c|c@{}}
   \begin{matrix}
       &  &   \\
       &  \frac{H^2-2\cR}{4}\Id_{n} -H\,\bs{\hat{S}} &  \\
       &  &  
   \end{matrix} 
      & A_{n\times 1} \\
   \cmidrule[0.4pt]{1-2}
   A_{1\times n} &\frac{n+1}{n+2}H^2 - \cR  \\
\end{array} \right) + \cO(\vep^{n+5}),
\end{align*}
so the Weingarten operator appears inside the covariance matrix in this case as well but multiplied by the mean curvature, which is a term in equation \ref{eq:iiiForm}.

Notice however that the argument in the proof of theorem \ref{th:sphComp} to equate the diagonal elements of this expansion with that of the actual eigenvalues cannot be made here, since there are off-diagonal error elements at the same order as the diagonal approximation (the aforementioned contributions from $x_\mu\, z$). Nevertheless, we can show how these do not affect the eigenvalues at that order by writing the eigenvalue-eigenvector equation as a series expansion term by term, which is always possible and converges for Hermitian matrices of converging power series elements \cite{rellich1969}, such as our $C(D_p(\vep))$. The limit matrix when $\vep\rightarrow 0$ is the zero matrix $C(D_p(0))=0_{n+1\times n+1}$, with zero as eigenvalue, $\lbd(0)=\lbd^{(0)}$, of multiplicity $(n+1)$. Following \cite{rellich1969}, at positive $\vep>0$, this degeneracy branches out into $(n+1)$ eigenvalues $\lbd_i(\vep)\rightarrow\lbd^{(0)}$, and $(n+1)$ orthonormal eigenvectors $\bs{V}_i(\vep)$, hence, without loss of generality normalizing by $V_n(\vep)$, we can write for any $i$:
\begin{align*}
& [\; a\,\vep^2
\left( \begin{array}{@{}c|c@{}}
   \begin{matrix}
		\Id_{n}
   \end{matrix} 
      & 0_{n\times 1} \\
   \cmidrule[0.4pt]{1-2}
   0_{1\times n} & 0  \\
\end{array}  \right)
	+b\,\vep^4
\left( \begin{array}{@{}c|c@{}}
   \begin{matrix}
       A_{n\times n}
   \end{matrix} 
      & B_{n\times 1} \\
   \cmidrule[0.4pt]{1-2}
   B_{1\times n} & C  \\
\end{array} \right) + \cO(\vep^{5})\; ][\, \bs{V}^{(0)} + \bs{V}^{(1)}\vep+\bs{V}^{(2)}\vep^2+\dots \,] = \\
& = (\lbd^{(1)}\vep^1+\lbd^{(2)}\vep^2+\lbd^{(3)}\vep^3+\lbd^{(4)}\vep^4+\dots)[\, \bs{V}^{(0)} + \bs{V}^{(1)}\vep+\bs{V}^{(2)}\vep^2+\dots \,].
\end{align*}
Even though the limit is the zero matrix, its completely degenerate eigenvalue has any orthonormal basis of $\RR^{n+1}$ as system of eigenvectors; the discussion in \cite{rellich1969} shows that it is the perturbation of a matrix what determines the limit orthonormal eigenvectors, i.e., we cannot prescribe them ab initio.
The block $A_{n\times n}$ is diagonal by the computations so far. The eigenvalue difference between the diagonalized matrix and $C(D_p(\vep))$ is bounded by the matrix norm difference between them, as in the proof \ref{th:sphComp}, which now is of order $\cO(\vep^{4})$ from block $B$, implying $\lbd^{(1)}=\lbd^{(3)}=0$, and $\lbd^{(2)}_i=a$ for $i=1,\dots,n$, and $\lbd^{(2)}_i=0$ for $i=n+1$. Therefore, equating terms at $\cO(\vep^4)$ (which is possible because the matrix operations are elementwise combinations of basic operations of converging power series) we get:
\begin{align*}
& [\; a
\left( \begin{array}{@{}c|c@{}}
   \begin{matrix}
		\Id_{n}
   \end{matrix} 
      & 0_{n\times 1} \\
   \cmidrule[0.4pt]{1-2}
   0_{1\times n} & 0  \\
\end{array}  \right)
	-\lbd^{(2)}\,\Id_{n+1}\, ]\,\bs{V}^{(2)} = [\, \lbd^{(4)}\,\Id_{n+1}- b
\left( \begin{array}{@{}c|c@{}}
   \begin{matrix}
		A_{n\times n}
   \end{matrix} 
      & B_{n\times 1} \\
   \cmidrule[0.4pt]{1-2}
   B_{1\times n} & C  \\
\end{array}  \right) 
	 \, ]\,\bs{V}^{(0)}.
\end{align*}
If $\mu=1,\dots,n$, then $\lbd^{(2)}=a\neq 0$ in our case, so at second order
\begin{align*}
& [\; a
\left( \begin{array}{@{}c|c@{}}
   \begin{matrix}
		\Id_{n}
   \end{matrix} 
      & 0_{n\times 1} \\
   \cmidrule[0.4pt]{1-2}
   0_{1\times n} & 0  \\
\end{array}  \right)
	-\lbd^{(2)}_\mu\,\Id_{n+1}\, ]\,\bs{V}^{(0)} =
\left( \begin{array}{@{}c|c@{}}
   \begin{matrix}
		0_{n\times n}
   \end{matrix} 
      & 0_{n\times 1} \\
   \cmidrule[0.4pt]{1-2}
   0_{1\times n} & -a  \\
\end{array}  \right)\,\bs{V}^{(0)} = 0,
\end{align*}
which means $a[\bs{V}^{(0)}]_{n+1}=0$, so the normal component $\bs{V}^{(0)}_\perp=0$, for the first $n$ limit eigenvectors. Hence, looking back at the fourth order matrix equation, the first $n$ rows of the LHS are 0, and the terms $B_{n\times 1}$ do not contribute due to being multiplied by $[\bs{V}^{(0)}]_{n+1}\equiv 0$, therefore we obtain a block matrix equation determining the tangent components of the limit eigenvectors: $$0_{n\times 1}=[\lbd^{(4)}\,\Id_n -bA_{n\times n}]\,\bs{V}^{(0)}_\top ,$$ which is a diagonal eigenvalue-eigenvector equation, i.e., $\lbd^{(4)}_\mu=bA_{\mu\mu}$ for $\mu=1,\dots,n$, and so the first $n$ limit eigenvectors are the principal directions.
If $i=n+1$, a similar argument shows that the last eigenvector satisfies $\bs{V}^{(0)}_\top=0$, and $\lbd^{(4)}=bC$. Therefore, the our diagonal expansions of $C(D_p(\vep))$ are exactly the eigenvalue expansion at $\cO(\vep^{n+4})$.
\end{proof}


\section{Multi-scale Curvature Descriptors for Hypersurfaces}\label{sec:descrip}

By solving the second term in the expansion from our integral invariants, we can extract the curvature information they encode and write it in terms of the volume or eigenvalues at a fixed scale, where the latter can be computed without knowledge of the geometry of $\cS$. This means that these local statistical measurements of the underlying point set can be employed to reconstruct its differential geometry, e.g., from a discrete sample cloud of points. These estimators can be used in geometry processing to ignore details below a given scale and act as feature detectors at the chosen size.

From the results shown, the eigenvectors of the covariance matrix serve as estimators of the principal and normal directions of a hypersurface at any generic point, thus fulfilling as well the role of an adapted Galerkin basis in the sense of \cite{broomhead1987topological} and \cite{solis2000}.

Employing the asymptotic expressions of section \S\ref{sec:sphVol}, we invert the relations and solve for the principal curvatures.

\begin{corollary}
	Abbreviating the integral invariants of the spherical component as $\lbd_\mu(p,\vep)\equiv\lbd_\mu(V^+_p(\vep)), V_p(\vep)\equiv V(V^+_p(\vep))$, then the corresponding descriptors of the principal curvatures, at scale $\vep>0$ and point $p\in\cS$, are given by
\begin{equation}
	\ka_\mu(V^+_p(\vep)) = \frac{n+4}{\vep^4V_n(\vep)}\left[ \frac{\vep^2V_{n+1}(\vep)}{n+3}-(n+1)\lbd_\mu(p,\vep) +\sum_{\al\neq\mu}^n\lbd_\al(p,\vep) \right],
\end{equation}
or equivalently by
\begin{align}
	H(V^+_p(\vep)) & = \frac{(n+2)V_{n+1}(\vep)}{\vep^2V_n(\vep)}\left(1-2\frac{V_p(\vep)}{V_{n+1}(\vep)}\right), \\[3mm]
	\ka_\mu(V^+_p(\vep)) & = \frac{(n+2)(n+4)}{\vep^4V_n(\vep)}\left( \frac{\vep^2V_{n+1}(\vep)}{2(n+3)}-\lbd_\mu(p,\vep)\right)+ \frac{1}{2}H(V^+_p(\vep))  ,
\end{align}
with corresponding errors $|H(p)-H(V^+_p(\vep))|\leq\cO(\vep)$, and $|\ka_\mu(p)-\ka_\mu(V^+_p(\vep))|\leq\cO(\vep)$, for any $\mu=1,\dots,n$. The eigenvectors $\bE_\mu(V^+_p(\vep))$ and $\bE_{n+1}(V^+_p(\vep))$ are descriptors of the principal and normal directions respectively.
\end{corollary}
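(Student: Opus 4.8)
The plan is to treat the three stated descriptor formulas as the solution of a linear system obtained by truncating the asymptotic expansions of \S\ref{sec:sphVol} and solving for the geometric unknowns $H$ and the $\ka_\mu$. All the analytic work has already been carried out in the volume proposition and in Theorem \ref{th:sphComp}; what remains is algebraic inversion together with careful bookkeeping of the error orders.

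First I would recall the volume expansion \ref{eq:volSph} and solve it directly for the mean curvature. Writing $V_p(\vep)=\tfrac{1}{2}V_{n+1}(\vep)-\tfrac{\vep^2 V_n(\vep)}{2(n+2)}H+\cO(\vep^{n+3})$ and isolating $H$ yields the stated $H(V^+_p(\vep))=\tfrac{(n+2)V_{n+1}(\vep)}{\vep^2 V_n(\vep)}\bigl(1-2V_p(\vep)/V_{n+1}(\vep)\bigr)$. Since the factor we divide by is $\vep^2 V_n(\vep)\sim\vep^{n+2}$, the neglected $\cO(\vep^{n+3})$ remainder is promoted to $\cO(\vep)$, which is exactly the claimed bound $|H(p)-H(V^+_p(\vep))|\le\cO(\vep)$.

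Next I would invert the tangential eigenvalue expansion \ref{eq:eigenSphComp}. Each of the first $n$ equations determines the single scalar combination $2\ka_\mu+H$, namely $2\ka_\mu+H=\tfrac{2(n+2)(n+4)}{\vep^4 V_n(\vep)}\bigl(\tfrac{\vep^2 V_{n+1}(\vep)}{2(n+3)}-\lbd_\mu(p,\vep)\bigr)+\cO(\vep)$, where dividing by the second-order scale $\vep^4 V_n(\vep)\sim\vep^{n+4}$ turns the $\cO(\vep^{n+5})$ remainder into $\cO(\vep)$. From here there are two routes to the two equivalent expressions. Substituting the $H$ descriptor just derived produces the mixed form stated for $\ka_\mu$. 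Alternatively, using the Newton identity $H=\sum_{\al=1}^n\ka_\al$, I would sum the $n$ relations to obtain $(n+2)H$ purely in terms of the eigenvalues, back-substitute and collect terms; this eliminates the volume entirely and recovers the first stated formula, in which $\ka_\mu$ appears through the spectral combination $-(n+1)\lbd_\mu+\sum_{\al\neq\mu}\lbd_\al$. Both routes inherit the $\cO(\vep)$ error, giving $|\ka_\mu(p)-\ka_\mu(V^+_p(\vep))|\le\cO(\vep)$.

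The only genuinely delicate point, and the step I expect to require the most care, is the error accounting: the descriptors are quotients whose denominators vanish like $\vep^{n+2}$ or $\vep^{n+4}$, so one must verify that every neglected term in \ref{eq:volSph} and \ref{eq:eigenSphComp} is small enough that, after division by this vanishing factor, it contributes no worse than $\cO(\vep)$ — which is precisely why those expansions were pushed to orders $\cO(\vep^{n+3})$ and $\cO(\vep^{n+5})$. Finally, the assertion that the eigenvectors $\bE_\mu(V^+_p(\vep))$ and $\bE_{n+1}(V^+_p(\vep))$ serve as descriptors of the principal and normal directions needs no fresh argument: it is exactly the convergence already established in Theorem \ref{th:sphComp}, so I would simply invoke it.
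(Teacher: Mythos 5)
Your proposal is correct and takes essentially the same route as the paper: solve the Hulin--Troyanov volume expansion for $H$, invert the tangential eigenvalue expansion and eliminate $H$ via $H=\sum_{\al=1}^n\ka_\al$ (the paper subtracts pairs of eigenvalue equations and back-substitutes, you sum all $n$ of them --- trivially equivalent linear algebra), and obtain the $\cO(\vep)$ error bounds by dividing the $\cO(\vep^{n+3})$ and $\cO(\vep^{n+5})$ remainders by the coefficients of order $\vep^{n+2}$ and $\vep^{n+4}$, with the eigenvector claim delegated to Theorem \ref{th:sphComp} exactly as the paper does. One remark: your substitution in fact yields $\ka_\mu = \frac{(n+2)(n+4)}{\vep^4V_n(\vep)}\left(\frac{\vep^2V_{n+1}(\vep)}{2(n+3)}-\lbd_\mu(p,\vep)\right)-\frac{1}{2}H(V^+_p(\vep))$ with a \emph{minus} sign, which is what the paper's own proof derives ($\ka_\mu=\frac{\lbd_\mu-a}{2b}-\frac{1}{2}H$) and is the version consistent with the first displayed formula, so the $+\frac{1}{2}H$ in the corollary statement is a sign typo rather than a defect in your argument.
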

\begin{proof}
Let us define the coefficients at scale 
	$$a = \frac{\vep^2\, V_{n+1}(\vep)}{2(n+3)},\qquad b=-\frac{\vep^4 V_{n}(\vep)}{2(n+2)(n+4)},$$
then the tangent eigenvalues from equation \ref{eq:eigenSphComp} solve the principal curvatures
$$\ka_\mu = \frac{\lbd_\mu - a}{2b} -\frac{1}{2}H +\cO(\vep).$$
Fixing one $\mu=1,\dots,n$, and subtracting any two such equations with $\mu\neq\al$ results in
$$
\ka_\al =\frac{\lbd_\al-\lbd_\mu}{2b} + \ka_\mu +\cO(\vep),
$$
inserting this into the definition of $H$ one gets
$$
H = n\ka_\mu +\sum_{\al\neq\mu}^n \frac{\lbd_\al-\lbd_\mu}{2b}  +\cO(\vep),
$$
which substituting back leaves
$$
\ka_\mu(V^+_p(\vep)) = \frac{\lbd_\mu - a}{b(n+2)} -\sum_{\al\neq\mu}^n \frac{\lbd_\al-\lbd_\mu}{2b(n+2)} =\frac{1}{2b(n+2)}\left( -2a+(n+1)\lbd_\mu - \sum_{\al\neq\mu}^n\lbd_\al\right).
$$
The truncation error is given by the order of $\cO(\vep^{n+5})/b\sim\cO(\vep)$. Alternatively, one can solve the Hulin-Troyanov relation \ref{eq:volSph} to obtain a descriptor of $H$, and then use this into the expression of $\ka_\mu$ in terms of $\lbd_\mu$ and $H$ above.
\end{proof}

An analogous inversion process can be carried out with the series expansions of section \S\ref{sec:sphPatch}.

\begin{corollary}
	Denoting by $\lbd(p,\vep)\equiv\lbd(D_p(\vep)), V_p(\vep)\equiv V(D_p(\vep))$ the integral invariants of the hypersurface patch domain, then the corresponding curvature descriptors at scale $\vep>0$ and point $p\in\cS$, for any $\mu=1,\dots,n$, are
\begin{align}
	\cR(D^+_p(\vep)) & = 2(n+2)^2(n+4)\frac{\lbd_{n+1}(p,\vep)}{n\,\vep^4\, V_n(\vep)} - \frac{8(n+1)(n+2)}{n\,\vep^2}\left(\frac{V_p(\vep)}{V_n(\vep)} - 1 \right) \\[3mm]
	H(D^+_p(\vep)) & = (\pm)\sqrt{ 4(n+2)^2(n+4)\frac{\lbd_{n+1}(p,\vep)}{n\,\vep^4 V_n(\vep)} +\frac{8(n+2)^2}{n\,\vep^2}\left(1-\frac{V_p(\vep)}{V_n(\vep)} \right) }, \\[3mm]
	\ka_\mu(D^+_p(\vep)) & = \frac{2(n+2)}{\vep^2 H(D^+_p(\vep))}\left[ \frac{V_p(\vep)}{V_n(\vep)}+\frac{n+4}{\vep^2}\left( \frac{\vep^2}{n+2}-\frac{\lbd_\mu(p,\vep)}{V_n(\vep)} \right) -1 \right],
\end{align}
where the overall sign can be chosen by fixing a normal orientation from $$(\pm)=\text{\emph{sgn}}\langle\,\bE_{n+1}(D_p(\vep)),\,\bb(D_p(\vep))\,\rangle .$$ The eigenvectors $\bE_\mu(D_p(\vep))$ and $\bE_{n+1}(D_p(\vep))$ are descriptors of the principal and normal directions respectively. The corresponding errors are $|H^2(p)-H(D_p(\vep))^2|\leq\cO(\vep)$, $|\cR(p)-\cR(D_p(\vep)) |\leq\cO(\vep)$, and $|\ka^2_\mu(p)-\ka_\mu(D_p(\vep))^2|\leq\cO(\vep)$.
\end{corollary}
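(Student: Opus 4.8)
The plan is to invert the three patch expansions of the previous section — equations \ref{eq:volPatch}, \ref{eq:eigen1Patch} and \ref{eq:eigen2Patch}, for the area, the $n$ degenerate leading eigenvalues, and the small eigenvalue respectively — solving them for the curvature scalars and the individual principal curvatures. The essential structural feature, in contrast to the spherical-component corollary, is that every next-to-leading coefficient here is \emph{even} in the principal curvatures: the area sees $H^2-2\cR$, the small eigenvalue $\lbd_{n+1}$ sees $\tfrac{n+1}{n+2}H^2-\cR$, and each tangent eigenvalue $\lbd_\mu$ sees $H^2-2\cR-4H\ka_\mu$. Consequently the invariants can reconstruct only quadratic curvature data, and the global sign of $H$ must be recovered by separate means.

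First I would isolate two independent even quadratics. From equation \ref{eq:eigen2Patch}, dividing by $V_n(\vep)\tfrac{\vep^4}{2(n+2)(n+4)}$ recovers $\tfrac{n+1}{n+2}H^2-\cR$ up to $\cO(\vep)$; from the second-order term of equation \ref{eq:volPatch}, dividing by $\tfrac{\vep^2}{8(n+2)}$ recovers $H^2-2\cR$ up to $\cO(\vep)$. These form a linear system in the unknowns $(H^2,\cR)$ whose determinant equals $-\tfrac{n}{n+2}\neq 0$, so it inverts uniquely to give the stated closed forms for $\cR(D_p(\vep))$ and for $H(D_p(\vep))^2$; a square root then yields $H(D_p(\vep))$ up to sign. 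Next, feeding the now-known value of $H^2-2\cR$ back into equation \ref{eq:eigen1Patch}, subtracting the combination $\tfrac{8(n+2)(n+4)}{\vep^4}\!\left(\tfrac{\lbd_\mu}{V_n(\vep)}-\tfrac{\vep^2}{n+2}\right)$ from $H^2-2\cR$ leaves $4H\ka_\mu$ at leading order, and dividing by $4H(D_p(\vep))$ produces the displayed descriptor $\ka_\mu(D_p(\vep))$.

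For the error accounting, each eigenvalue invariant carries an absolute error $\cO(\vep^{n+5})$ and the area an error $\cO(\vep^{n+3})$; after dividing by $V_n(\vep)\sim\vep^{n}$ and the relevant power of $\vep$, all residuals collapse to $\cO(\vep)$. Because only squares and products are reconstructed, the natural and cleanest bounds are $|H^2(p)-H(D_p(\vep))^2|\le\cO(\vep)$ and $|\cR(p)-\cR(D_p(\vep))|\le\cO(\vep)$; the principal-curvature bound $|\ka_\mu^2(p)-\ka_\mu(D_p(\vep))^2|\le\cO(\vep)$ then follows by dividing the $\cO(\vep)$-accurate product $H\ka_\mu$ by the $\cO(\vep)$-accurate $H^2$, a step controlled precisely where $H(p)\neq 0$ — the same regime in which the descriptor, carrying $H(D_p(\vep))$ in its denominator, is well defined.

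The main obstacle is exactly this sign of $H$, invisible to the even invariants. I would resolve it through Proposition \ref{prop:patchBary}: the barycenter \ref{eq:patchBary} has normal component $\tfrac{\vep^2}{2(n+2)}H+\cO(\vep^3)$, while the patch covariance theorem guarantees $\bE_{n+1}(D_p(\vep))\to\bN(p)$ as $\vep\to0^+$. Hence $\langle\bE_{n+1}(D_p(\vep)),\bb(D_p(\vep))\rangle=\tfrac{\vep^2}{2(n+2)}H+o(\vep^2)$ shares the sign of $H$ for small $\vep$ relative to the chosen normal orientation, justifying the prescription $(\pm)=\mathrm{sgn}\langle\bE_{n+1}(D_p(\vep)),\bb(D_p(\vep))\rangle$ and fixing the branch of the square root. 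The eigenvector assertions are immediate from the convergence already established in that covariance theorem.
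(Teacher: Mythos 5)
Your proposal is correct and follows essentially the same route as the paper's proof: both set up the linear system $A=H^2-2\cR$, $B=\tfrac{n+1}{n+2}H^2-\cR$ from the volume and small-eigenvalue expansions, invert it for $(\cR,H^2)$, back-substitute $H^2-2\cR$ into the tangent-eigenvalue relation to isolate $4H\ka_\mu$, and fix the sign of $H$ via $\mathrm{sgn}\langle\bE_{n+1}(D_p(\vep)),\bb(D_p(\vep))\rangle$ using Proposition \ref{prop:patchBary}. Your additional remarks (nonvanishing determinant $\pm\tfrac{n}{n+2}$, the explicit $H(p)\neq 0$ caveat for the $\ka_\mu^2$ error bound) are consistent refinements of the paper's terser error discussion.
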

\begin{proof}
By solving the second term in equations \ref{eq:volPatch} and \ref{eq:eigen2Patch}, let us define
\begin{align*}
	A & = \frac{8(n+2)}{\vep^2}\left(\frac{V_p(\vep)}{V_n(\vep)} - 1 \right) + \cO(\vep), \\[1mm]
	B & = 2(n+2)(n+4)\frac{\lbd_{n+1}(p,\vep)}{\vep^4\, V_n(\vep)} + \cO(\vep),
\end{align*}	
so that we have the system of equations $A=H^2-2\cR,\; B=\frac{n+1}{n+2}H^2-\cR$ whose solution is
\begin{align*}
	\cR & = \frac{1}{n}((n+2)B - (n+1)A), \\
	H^2 & = \frac{(n+2)}{n}(2B - A).
\end{align*}
We can approximate the normal direction and orientation by using $\bE_{n+1}(p,\vep)$, and since the barycenter \ref{eq:patchBary} has normal component with leading order in terms of $H$, their mutual projection can serve to fix the orientation and overall relative sign of all the principal curvatures. The principal curvatures themselves are then solved from eq. \ref{eq:eigen1Patch} substituting the value of $H$ above, resulting in $\displaystyle\ka_\mu=\frac{1}{4H}(A-\Gamma_\mu),$ where $$\Gamma_\mu=\frac{8(n+2)(n+4)}{\vep^4}\left(\frac{\lbd_\mu(p,\vep)}{V_n(\vep)}-\frac{\vep^2}{n+2} \right)+\cO(\vep).$$ The errors follow straightforwardly by the truncation of $A,\, B,\,\Gamma_\mu$.
\end{proof} 

In the spirit of the limit formula obtained in \cite{alvarez2017} for regular curves in $\RR^n$, relating ratios of the covariance eigenvalues to the Frenet-Serret curvatures, we also state here analogous expressions for hypersurfaces using the ratios of the covariance eigenvalues, whose proofs are straightforward.

\begin{corollary}
	Let $p\in\cS$ and consider the spherical component invariants. Then for any $\mu,\nu=1,\dots,n$, the first $n$ eigenvalues, $\lbd_\mu(p,\vep)\equiv\lbd_\mu(V^+_p(\vep))$, of the covariance matrix $C(V^+_p(\vep))$ satisfy the following limit ratio:
\begin{equation}
	\lim_{\vep\rightarrow 0^+}\frac{V^2_{n+1}(\vep)}{V_n(\vep)}\frac{\lbd_\mu(p,\vep) -\lbd_\nu(p,\vep)}{\lbd_\mu(p,\vep)\lbd_\nu(p,\vep)} = \frac{4(n+3)^2}{(n+2)(n+4)}[\ka_\nu(p)-\ka_\mu(p)].
\end{equation}
\end{corollary}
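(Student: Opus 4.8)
The plan is to substitute the eigenvalue asymptotics of Theorem \ref{th:sphComp} directly and bookkeep the powers of $\vep$. Abbreviate the two scale coefficients appearing there as $a(\vep)=V_{n+1}(\vep)\frac{\vep^2}{2(n+3)}$ and $b(\vep)=V_n(\vep)\frac{\vep^4}{2(n+2)(n+4)}$, so that for each $\mu=1,\dots,n$ one has $\lbd_\mu(p,\vep)=a-b(2\ka_\mu+H)+\cO(\vep^{n+5})$. The first step is the numerator: in the difference $\lbd_\mu-\lbd_\nu$ the purely scale-dependent term $a$ and the mean-curvature contribution $-bH$ are both independent of the index, so they cancel, leaving
\[
\lbd_\mu(p,\vep)-\lbd_\nu(p,\vep)=2b(\ka_\nu-\ka_\mu)+\cO(\vep^{n+5})=V_n(\vep)\frac{\vep^4}{(n+2)(n+4)}(\ka_\nu-\ka_\mu)+\cO(\vep^{n+5}).
\]
Since $b\sim\vep^{n+4}$, this leading term is of order $\vep^{n+4}$ and its relative error is $\cO(\vep)$.

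The second step is the denominator. Each eigenvalue is dominated by $a\sim\vep^{n+3}$, the corrections being $\cO(\vep^{n+4})$, so the product factorizes as $\lbd_\mu\lbd_\nu=a^2\bigl(1+\cO(\vep)\bigr)=\frac{V_{n+1}(\vep)^2\vep^4}{4(n+3)^2}\bigl(1+\cO(\vep)\bigr)$. Assembling the ratio and multiplying by the prefactor $V_{n+1}(\vep)^2/V_n(\vep)$ makes the volume factors cancel cleanly at leading order:
\[
\frac{V_{n+1}(\vep)^2}{V_n(\vep)}\cdot\frac{2b(\ka_\nu-\ka_\mu)}{a^2}=\frac{V_{n+1}(\vep)^2}{V_n(\vep)}\cdot\frac{V_n(\vep)\frac{\vep^4}{(n+2)(n+4)}}{\frac{V_{n+1}(\vep)^2\vep^4}{4(n+3)^2}}(\ka_\nu-\ka_\mu)=\frac{4(n+3)^2}{(n+2)(n+4)}(\ka_\nu-\ka_\mu),
\]
and letting $\vep\to0^+$ kills the remaining $\cO(\vep)$ corrections, yielding the claimed limit. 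Here the hypothesis $\ka_\mu\neq\ka_\nu$ of distinct principal curvatures guarantees the numerator difference is genuinely of order $\vep^{n+4}$ rather than vanishing to higher order, so the quotient is well defined in the limit.

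The computation is entirely elementary once Theorem \ref{th:sphComp} is available, which is why the statement advertises the proof as straightforward. The only point I expect to require care—and hence the main (if mild) obstacle—is verifying that the remainders are genuinely subleading after the division: the numerator remainder $\cO(\vep^{n+5})$ and the denominator correction $\cO(\vep^{n+4})$ must both be checked against the small product $\lbd_\mu\lbd_\nu\sim\vep^{2n+6}$ and the prefactor $V_{n+1}(\vep)^2/V_n(\vep)\sim\vep^{n+2}$. Tracking these orders shows the surviving error in the whole expression is $\cO(\vep)$, so the limit is exactly the quotient of leading coefficients; the prefactor $V_{n+1}(\vep)^2/V_n(\vep)$ is evidently engineered precisely to neutralize the net $\vep$-power and produce a finite, nonzero limit.
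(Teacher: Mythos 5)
Your proof is correct and is exactly the straightforward substitution the paper has in mind: the paper omits the argument entirely, remarking only that these corollaries' ``proofs are straightforward,'' and your bookkeeping of the $\vep$-orders (numerator $2b(\ka_\nu-\ka_\mu)+\cO(\vep^{n+5})$, denominator $a^2(1+\cO(\vep))$, prefactor $V_{n+1}^2(\vep)/V_n(\vep)$ cancelling the net power) is the intended computation. One small correction: the corollary states no hypothesis of distinct principal curvatures and none is needed --- if $\ka_\mu(p)=\ka_\nu(p)$ the numerator is $\cO(\vep^{n+5})$ and both sides of the limit are zero, so your argument covers that case as well rather than requiring it to be excluded.
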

\vspace{0.25cm}
\begin{corollary}
	Let $p\in\cS$ and consider the hypersurface patch invariants. Then for any $\mu,\nu=1,\dots,n$, the first $n$ eigenvalues, $\lbd_\mu(p,\vep)\equiv\lbd_\mu(D_p(\vep))$, of the covariance matrix $C(D_p(\vep))$ satisfy the following limit ratio:
\begin{equation}
	\lim_{\vep\rightarrow 0^+}V_n(\vep)\frac{\lbd_\mu(p,\vep) -\lbd_\nu(p,\vep)}{\lbd_\mu(p,\vep)\lbd_\nu(p,\vep)} = \frac{n+2}{2(n+4)}[\ka_\nu(p) -\ka_\mu(p)]H(p),
\end{equation}
	and the last eigenvalue satisfies:
\begin{equation}
	\lim_{\vep\rightarrow 0^+}V_n(\vep)\frac{\lbd_{n+1}(p,\vep)}{\lbd_\mu(p,\vep)\lbd_\nu(p,\vep)} = \frac{n+2}{2(n+4)}\left[\frac{n+1}{n+2}H^2(p) -\cR(p)\right].
\end{equation}
\end{corollary}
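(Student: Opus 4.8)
The plan is to substitute the eigenvalue expansions \eqref{eq:eigen1Patch} and \eqref{eq:eigen2Patch} directly into the two ratios and read off the $\vep\to 0^+$ limit, keeping careful track of the powers of $\vep$ carried by the prefactor $V_n(\vep)\sim\vep^n$. With this scaling in mind, each of the first $n$ eigenvalues $\lbd_\mu(p,\vep)$ has leading behaviour $V_n(\vep)\,\vep^2/(n+2)\sim\vep^{n+2}$, while the normal eigenvalue $\lbd_{n+1}(p,\vep)$ is of order $\vep^{n+4}$, one factor of $\vep^2$ smaller. These two scalings are what make the ratios converge to finite constants after the external multiplication by $V_n(\vep)$.

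For the first identity I would form the difference $\lbd_\mu-\lbd_\nu$ using \eqref{eq:eigen1Patch}. The common leading term $V_n(\vep)\vep^2/(n+2)$ and the curvature-independent part $H^2-2\cR$ are identical for both indices and cancel, so the surviving contribution comes solely from the $-4H\ka_\mu$ term,
\begin{equation*}
\lbd_\mu(p,\vep)-\lbd_\nu(p,\vep)=V_n(\vep)\,\frac{\vep^4}{2(n+2)(n+4)}\,H\,(\ka_\nu-\ka_\mu)+\cO(\vep^{n+5}).
\end{equation*}
In the denominator it suffices to retain the leading term of each factor, giving $\lbd_\mu\lbd_\nu=V_n(\vep)^2\,\vep^4/(n+2)^2+\cO(\vep^{2n+5})$. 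Multiplying by the external $V_n(\vep)$, one factor of $V_n(\vep)$ and the factor $\vep^4$ cancel between numerator and denominator, leaving the constant $\tfrac{n+2}{2(n+4)}[\ka_\nu-\ka_\mu]H$ claimed in the statement.

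The second identity is even more direct: substituting \eqref{eq:eigen2Patch} for the numerator and the same leading product $\lbd_\mu\lbd_\nu=V_n(\vep)^2\vep^4/(n+2)^2+\cdots$ for the denominator, one factor of $V_n(\vep)\,\vep^4$ again cancels after multiplying by the external $V_n(\vep)$, leaving exactly $\tfrac{n+2}{2(n+4)}\big(\tfrac{n+1}{n+2}H^2-\cR\big)$.

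The only point that requires genuine care — and the step I expect to be the mild obstacle — is confirming that the $\cO(\vep^{n+5})$ remainders are truly subdominant once they are divided by the $\sim\vep^{2n+4}$ product and multiplied by $V_n(\vep)\sim\vep^n$: the retained numerator term is of order $\vep^{n+4}$ whereas the remainder is $\vep^{n+5}$, so their quotient is $\cO(\vep)$ and the full expression converges to the stated constant. No nondegeneracy hypothesis is actually needed for these two limits, since the eigenvalue expansions of the preceding theorem hold irrespective of curvature multiplicities; when $\ka_\mu=\ka_\nu$ both sides of the first identity simply vanish.
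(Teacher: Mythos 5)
Your proposal is correct and is exactly the argument the paper has in mind: the paper omits the proof, calling it ``straightforward,'' and the intended route is precisely your direct substitution of the expansions \eqref{eq:eigen1Patch} and \eqref{eq:eigen2Patch} into the ratios, cancellation of the common leading terms, and the observation that the $\cO(\vep^{n+5})$ remainders contribute only $\cO(\vep)$ after dividing by the $\sim\vep^{2n+4}$ product and multiplying by $V_n(\vep)$. Your closing remark that no curvature-distinctness hypothesis is needed (both sides vanishing when $\ka_\mu=\ka_\nu$) is also a correct and worthwhile observation, since that hypothesis in the preceding theorem is only required for eigenvector convergence, not for the eigenvalue expansions.
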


These ratios can be used as well to define descriptors solving for the curvature variables aided by the volume descriptor, like in the preceding corollaries.



\section{Conclusions}
\label{sec:conclusions}

In this paper we have generalized major PCA methods and results known for surfaces in space to establish the asymptotic relationship between integral invariants and the principal curvatures and principal directions of hypersurfaces of any dimension, which furnishes a method to obtain geometric descriptors at any given scale using the eigenvalue decomposition of the covariance matrix. We have seen that these methods are sufficient to provide also estimators of the Riemann curvature tensor of embedded submanifolds of higher-codimension, using its hypersurface projections onto the linear subspaces in ambient space spanned by the tangent space and each of the normal vectors from an orthonormal basis. These results establish a theoretical ground for the usage of the computational integral invariant approach to study the geometry of point clouds of high dimensionality, which can become a very valuable tool for Manifold Learning and Geometry Processing in a general setting.


\appendix
\section{Integration of Monomials over Spheres}\label{sec:appendix}
Let $\bx =(x_1,\dots,x_n )\in\RR^n$, and denote the sphere and ball of radius $\vep$ in $\RR^n$ by:
$$
\SS^{n-1}(\vep)=\{\bx\in\RR^n : \|\bx\|=\vep \},\quad B^{n}(\vep)=\{\bx\in\RR^n : \|\bx\|\leq\vep \},
$$
where we set $\SS^{n-1}=\SS^{n-1}(1)$. Using generalized spherical coordinates $(r,\phi_1,\dots,\phi_{n-1})$, where $r=\|\bx\|,\; \cx_\mu=x_\mu / r\in\SS^{n-1}$, i.e.,
$$
\cx_1=\cos\phi_1,\;\dots,\quad\cx_{n-1}=\sin\phi_1\cdots\sin\phi_{n-2}\cos\phi_{n-1},\quad \cx_n=\sin\phi_1\cdots\sin\phi_{n-2}\sin\phi_{n-1},
$$
the Euclidean measure over the unit sphere and ball of any radius can be written as 
\begin{equation}\label{eq:cylMeasure}
	d\,\SS^{n-1}=d\phi_{n-1}\prod_{\mu=1}^{n-2}\sin^{n-1-\mu}(\phi_\mu)d\phi_\mu,\quad\quad d^nB=dx_1\cdots dx_n=r^{n-1}dr\;d\,\SS^{n-1}.
\end{equation}

\begin{definition}
 For any integers $\al_1,\dots,\al_n\in\{0,1,2,\dots\}$, the integrals of the monomials $x_1^{\al_1}\cdots x_n^{\al_n}$ over the unit sphere and the ball of radius $\vep$ are denoted by:
\begin{equation}\label{eq:constants}
	C^{(n)}_{\al_1\dots\al_n}=\int_{\SS^{n-1}} x_1^{\al_1}\cdots x_n^{\al_n}\; d\,\SS^{n-1},\quad\quad
	D^{(n)}_{\al_1\dots\al_n}=\int_{B^n(\vep)} x_1^{\al_1}\cdots x_n^{\al_n}\; d^nB.
\end{equation}
\end{definition}

These can be computed directly in spherical coordinates by collecting factors and separating the integrals into a product of integrals of powers of sines and cosines which can be given in terms of the Beta function, that then telescopes and simplifies; other shorter proof uses the usual exponential trick, see for example \cite{folland2001}, resulting in the following formula.

\begin{theorem}\label{th:folland}
	Denoting $\bet_\mu=\frac{1}{2}(\al_\mu+1)$, the values of the integrals \ref{eq:constants} over spheres are
\begin{equation}
	C^{(n)}_{\al_1\dots\al_n}=\begin{cases}
		0, & \text{ if some $\al_\mu$ is odd,} \\ \displaystyle
		2\frac{\Gamma(\bet_1)\Gamma(\bet_2)\cdots\Gamma(\bet_n)}{\Gamma(\bet_1+\bet_2+\cdots +\bet_n)}, & \text{ if all $\al_\mu$ are even},
	\end{cases}
\end{equation}
and the integrals over balls become
\begin{equation}
	D^{(n)}_{\al_1\dots\al_n}=\frac{\vep^{n+(\al_1+\cdots+\al_n)}}{n+(\al_1+\cdots+\al_n)}\; C^{(n)}_{\al_1\dots\al_n}.
\end{equation}
\end{theorem}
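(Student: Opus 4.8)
The plan is to follow the exponential (Gaussian) trick alluded to in the text, evaluating the auxiliary integral
$$
I := \int_{\RR^n} x_1^{\al_1}\cdots x_n^{\al_n}\, e^{-\|\bx\|^2}\, d\bx
$$
in two different ways and comparing the results. On the one hand, since the Gaussian weight factorizes as $e^{-\|\bx\|^2}=\prod_\mu e^{-x_\mu^2}$, the integral $I$ splits as a product of one-dimensional moments $\prod_{\mu=1}^n \int_{-\infty}^\infty x_\mu^{\al_\mu}\, e^{-x_\mu^2}\,dx_\mu$. First I would record that each factor vanishes when $\al_\mu$ is odd, by oddness of the integrand; this immediately forces $I=0$ and, when compared with the polar evaluation below, yields the first (vanishing) case of the statement at no extra cost. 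When $\al_\mu=2m$ is even, the substitution $u=x_\mu^2$ turns the factor into $\Ga\!\left(\tfrac{\al_\mu+1}{2}\right)=\Ga(\bet_\mu)$, so that $I=\prod_{\mu=1}^n\Ga(\bet_\mu)$ in the all-even case.

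On the other hand, I would re-express $I$ in spherical coordinates using \ref{eq:cylMeasure}, writing $x_\mu=r\cx_\mu$ and $d\bx=r^{n-1}dr\,d\,\SS^{n-1}$. Because the monomial is homogeneous of degree $A:=\al_1+\cdots+\al_n$, the angular and radial parts separate:
$$
I=\left(\int_0^\infty r^{\,n-1+A}\,e^{-r^2}\,dr\right)\left(\int_{\SS^{n-1}} \cx_1^{\al_1}\cdots \cx_n^{\al_n}\, d\,\SS^{n-1}\right) = \left(\int_0^\infty r^{\,n-1+A}\,e^{-r^2}\,dr\right) C^{(n)}_{\al_1\dots\al_n}.
$$
The radial integral is handled by the same $u=r^2$ substitution, giving $\tfrac{1}{2}\Ga\!\left(\tfrac{n+A}{2}\right)$. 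The key algebraic observation — the one small point to get right — is that $\sum_{\mu=1}^n \bet_\mu=\sum_\mu\tfrac{\al_\mu+1}{2}=\tfrac{n+A}{2}$, so this Gamma matches exactly the denominator in the claimed formula. Equating the two evaluations of $I$ and solving for $C^{(n)}_{\al_1\dots\al_n}$ gives $C^{(n)}_{\al_1\dots\al_n}=2\,\Ga(\bet_1)\cdots\Ga(\bet_n)/\Ga(\bet_1+\cdots+\bet_n)$, as stated.

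For the ball integral I would simply expand $D^{(n)}_{\al_1\dots\al_n}$ directly in polar coordinates over $B^n(\vep)$. Homogeneity again separates the radial and angular parts, the radial factor is now the elementary $\int_0^\vep r^{\,n-1+A}\,dr=\vep^{\,n+A}/(n+A)$, and the angular factor is precisely $C^{(n)}_{\al_1\dots\al_n}$; this yields the stated relation. There is no genuine obstacle here: the proof is essentially bookkeeping with Gamma-function arguments, and the only step requiring care is verifying $\sum_\mu\bet_\mu=\tfrac{1}{2}(n+A)$ so that the radial Gamma in the polar computation reproduces the denominator produced by the Cartesian factorization. (The alternative route via Beta functions — integrating powers of sines and cosines in generalized spherical coordinates and telescoping — reaches the same formula but is considerably more laborious, so I would prefer the exponential trick.)
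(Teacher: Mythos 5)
Your proposal is correct and takes essentially the same approach as the paper, which itself proves the theorem by the ``exponential trick'' it cites from Folland: you carry out both evaluations of the Gaussian-weighted integral correctly, the key bookkeeping identity $\sum_{\mu}\beta_\mu=\tfrac{1}{2}(n+\alpha_1+\cdots+\alpha_n)$ holds, and the radial integration $\int_0^\vep r^{\,n-1+A}\,dr=\vep^{\,n+A}/(n+A)$ gives the ball formula exactly as stated. There is nothing to add.
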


Notice that the values of the integrals of these monomials only depend on the combination of powers, not on which particular coordinates have those powers. Using these formulas we compute the relevant integrals that are needed for our work.
\begin{remark}
	Unless integrals over spheres of different dimension appear in the same expression, we shall abbreviate and omit the superscript $^{(n)}$ to be understood from the context.
\end{remark}

\begin{example}\label{ex:constants}
	Using the factorial property of the gamma function, $\Gamma(z+1)=z\Gamma(z)$, and the value $\Gamma(\frac{1}{2})=\sqrt{\pi}$, the integrals of monomials of even powers of order 2, 4 and 6, have the following relations (shortening $d\,\SS^{n-1}$ as $d\,\SS$):
\begin{align*}
	& C_{2} = \int_{\SS^{n-1}} x_1^{2}\; d\,\SS= 2\frac{\Gamma(\frac{3}{2})\Gamma(\frac{1}{2})^{n-1}}{\Gamma(\frac{3}{2}+\frac{n-1}{2})}=\frac{\pi^{n/2}}{\Gamma(\frac{n}{2}+1)}, \\[1mm]
	& C_{22}= \int_{\SS^{n-1}} x_1^{2}x_2^2\; d\,\SS = \frac{1}{n+2}\, C_2, \\[1mm]
	& C_{4}= \int_{\SS^{n-1}} x_1^{4}\; d\,\SS = \frac{3}{n+2}\, C_2 = 3\, C_{22}, \\[1mm]
	& C_{222}= \int_{\SS^{n-1}} x_1^{2}x_2^2x_3^2\; d\,\SS = \frac{1}{(n+2)(n+4)}\, C_2, \\[1mm]
	& C_{24}= \int_{\SS^{n-1}} x_1^{2}x_2^4\; d\,\SS = \frac{3}{(n+2)(n+4)}\, C_2 = 3\, C_{222}, \\[1mm]
	& C_{6}= \int_{\SS^{n-1}} x_1^{6}\; d\,\SS = \frac{15}{(n+2)(n+4)}\, C_2 = 15\, C_{222}.
\end{align*}
The value of $C_2$ is related to the $n$-dimensional volume of the ball of radius $\vep$, and the $(n-1)$-dimensional area of the unit sphere by
\begin{equation*}
	V_n(\vep)=\text{Vol}(B^n(\vep)) = \vep^n\, C_2,\quad\quad S_{n-1}=\text{Area}(\SS^{n-1})= n\, C_2.
\end{equation*}
The integrals over balls needed in our work are:
\begin{align*}
	& D_2 = \int_{B^n(\vep)} x_1^{2}\;\; dx_1\cdots dx_n = \frac{\vep^{n+2}}{n+2}C_2 = \frac{\vep^2}{n+2}V_n(\vep) , \\
	& D_{22}  = \int_{B^n(\vep)} x_1^{2}x_2^2\;\; dx_1\cdots dx_n = \frac{\vep^{n+4}}{(n+2)(n+4)}C_2= \frac{\vep^4}{(n+2)(n+4)}V_n(\vep) , \\
	& D_4  = \int_{B^n(\vep)} x_1^{4}\;\; dx_1\cdots dx_n = \frac{3\;\vep^{n+4}}{(n+2)(n+4)}C_2 = \frac{3\,\vep^4}{(n+2)(n+4)}V_n(\vep).
\end{align*}
\end{example}

We also need the integral of monomials over half-balls $B^+(\vep)$ (without loss of generality we can consider the half-ball is defined by $x_1\geq 0$). If all the $\al_i$ are even then nothing changes in the proof of theorem \ref{th:folland} except that now we integrate over half the domain and an extra factor of $\frac{1}{2}$ is needed. If any $\al_i$ is odd for $i\neq 1$, the integration over those variables is still carried out over the same domain so the overall integral is still 0. However, if $\al_1$ is odd the corresponding integral of that coordinate does not cancel out, and the main formula still holds with $\bet_1=1$ but without the factor of 2.
\begin{example}
Using the formula in the mentioned adjusted form, we define and compute
$$
	D^{(n)}_1 = \int_{B^+(\vep)} x_1\;\; dx_1\cdots dx_{n} =\frac{\vep^{n+1}\,\pi^{\frac{n-1}{2}}}{2\Gamma(\frac{n+3}{2})},
$$
which gives the constant needed in our main text
$$
	D^{(n+1)}_1 = \int_{B^+(\vep)} x_1\;\; dx_1\cdots dx_{n+1} = \frac{\vep^2}{n+2}V_n(\vep).
$$
When integrating $\int_{B^+(\vep)} x_1^2\;\;\text{dVol}$, we shall just write $\frac{D_2}{2}$ to be consistent with our notation above.
\end{example}


\addtocontents{toc}{\SkipTocEntry}
\section*{Acknowledgments}
We would like to thank Louis Scharf for very helpful discussions. J.\'A.V. would like to thank Miguel Dovale \'Alvarez for many useful discussions during the writing of this paper.

\bibliographystyle{amsplain}
\bibliography{AKP-Principal-Curvatures}

\end{document}